
\documentclass[11pt,reqno,a4paper]{amsart}

\usepackage[latin1]{inputenc}
\usepackage{tikz}
\usetikzlibrary{shapes,arrows}
\usetikzlibrary{arrows.meta}
\usepackage{forest}
\usepackage{tikz-qtree}

\usetikzlibrary{arrows,shapes,positioning,shadows,trees}

\usepackage[hidelinks]{hyperref}

\oddsidemargin0.3cm
\evensidemargin0.3cm
\textwidth15.5cm
\textheight22.5 cm
\topmargin=1cm

\usepackage{etoolbox}
\usepackage{amsmath}
\usepackage{enumerate}
\usepackage{amssymb}
\usepackage{amscd}
\usepackage{amsthm}
\usepackage{amsfonts}
\usepackage{graphicx}
\usepackage[all,cmtip]{xy}
\usepackage{enumitem}

\patchcmd{\subsection}{-.5em}{.5em}{}{}
\patchcmd{\subsubsection}{-.5em}{.5em}{}{}

\usepackage{enumitem}






\makeatother
\usepackage{hyperref}

\bibliographystyle{amsplain}

\numberwithin{equation}{section}

\newcommand{\Y}{Y}

\newcommand{\y}{y}

\newcommand{\X}{X}

\newcommand{\x}{x}

\newcommand{\G}{G}
\newcommand{\g}{g}
\newcommand{\K}{K}
\newcommand{\kk}{k}
\newcommand{\Lam}{\Lambda}
\newcommand{\la}{\lambda}
\newcommand{\Gam}{\Gamma}
\newcommand{\ga}{\gamma}
\newcommand{\E}{E}

\newcommand{\R}{R}
\newcommand{\rr}{r}
\newcommand{\Ss}{S}
\newcommand{\s}{s}
\newcommand{\U}{U}
\newcommand{\uu}{u}
\newcommand{\V}{V}
\newcommand{\vv}{v}

\newcommand{\Cc}{C}

\newcommand{\W}{W}
\newcommand{\ww}{w}

\newcommand{\ttt}{t}


\newcommand{\Z}{Z}
\newcommand{\cZ}{\mathcal{Z}}

\newcommand{\z}{z}

\newcommand{\nl}{l}
\newcommand{\J}{J}

\newcommand{\m}{m}
\newcommand{\n}{n}


\newcommand\subsetsim{\mathrel{%
\ooalign{\raise0.2ex\hbox{$\subset$}\cr\hidewidth\raise-0.8ex\hbox{\wwcalebox{0.9}{$\wim$}}\hidewidth\cr}}}

\DeclareMathOperator{\Ent}{Ent}
\DeclareMathOperator{\BndEnt}{BndEnt}
\DeclareMathOperator{\Prob}{Prob}
\DeclareMathOperator{\SubSum}{SubSum}

\DeclareMathOperator{\pr}{pr}

\DeclareMathOperator{\poi}{Poi}
\DeclareMathOperator{\supp}{supp}

\theoremstyle{theorem}
\newtheorem{theorem}{Theorem}[section]

\newtheorem{corollary}[theorem]{Corollary}
\newtheorem{proposition}[theorem]{Proposition}
\newtheorem{lemma}[theorem]{Lemma}

\theoremstyle{definition}
\newtheorem{definition}[theorem]{Definition}
\newtheorem{remark}[theorem]{Remark}


\tikzstyle{decision} = [diamond, draw, fill=blue!20, 
    text width=4.5em, text badly centered, node distance=3cm, inner sep=0pt]
\tikzstyle{block} = [rectangle, draw, fill=blue!20, 
    text width=5em, text centered, rounded corners, minimum height=2em]
\tikzstyle{line} = [draw, -latex']
\tikzstyle{cloud} = [draw, ellipse,fill=red!20, node distance=3cm,
    minimum height=2em]

\renewcommand\labelenumi{(\roman{enumi})}
\renewcommand\theenumi\labelenumi

\usepackage{changepage}
\usepackage{mathtools}
\usepackage{graphicx}
\usepackage{calc}

\DeclarePairedDelimiterX\Set[2]{\{}{\}}{#1\,\delimsize\vert\,#2}

\begin{document}
\bibliographystyle{plain} 

\title{Boundary entropy spectra as finite subsums} 

\author{Hanna Oppelmayer}

\address{previous: Chalmers, Sweden\\current: TU Graz, Austria}
\email{oppelmayer@math.tugraz.at}

\keywords{Random walks on groups, boundary entropy spectrum}

\subjclass[2010]{Primary: 37A50; Secondary: 05C81, 58J51}

\maketitle

\begin{abstract}\noindent
In this paper we provide a concrete construction of  Furstenberg entropy values of $\tau$-boundaries of the  group $\mathbb{Z}[\frac{1}{p_1},\ldots,\frac{1}{p_{\nl}}]\rtimes  \{p_1^{\n_1}\cdots p_{\nl}^{\n_{\nl}} \, : \, \n_i\in\mathbb{Z}\}$ by choosing an appropriate random walk $\tau$.
We show that the boundary entropy spectrum can be realized as the subsum-set for any given finite sequence of positive numbers.
\noindent 
\end{abstract}

\section{Introduction}
A \textit{random walk} on a countable infinite discrete group $\Gamma$ is given by a probability measure $\tau$ on  $\Gamma$, whose support generates the group as a semi-group.    In  1963 Furstenberg introduced  in \cite{furst} so-called \textit{boundaries} for random walks. These are standard probability spaces $(\Y,\eta)$ on which $\Gamma$ acts measurably such that $\eta$ is  $\tau$-stationary (see page 3 for a definition) and
for $\tau^{\bigotimes\mathbb{N}}$-almost every sequence $(\gamma_i)_{i\in\mathbb{N}}\in \Gamma^{\mathbb{N}}$ the image measures $\gamma_1\cdots\gamma_n \eta$ converge  in the weak*-topology 
on a compact model to a Dirac measure (see Definition~\ref{def: poi}). 
To quantify these spaces, Furstenberg introduced 
 a notion of \textit{entropy} defined as
$$h_{\tau}(\Y,\eta):=\int_{\Gamma} \int_{\Y} -\log\Big(\frac{d\gamma^{-1}\eta}{d\eta}(\y)\Big)\, d\eta(\y)\, d\tau(\gamma).
$$ 
The boundary entropy spectrum and the entropy spectrum of $(\Gamma,\tau)$ are defined as
$$\BndEnt(\Gamma,\tau):=\{h_{\tau}(\Y,\eta)\, : \, (\Y,\eta) \text{ a $\tau$-boundary}\}
$$
and 
$$\Ent(\Gamma,\tau):=\{h_{\tau}(\Y,\eta)\, : \, (\Y,\eta) \text{ a $\tau$-stationary, ergodic probability $\Gamma$-space}\}.
$$
The latter has been intensively studied, e.g. Lewis Bowen proved in \cite{bowen} that the free group $\mathbb{F}_r$ for $r\geq 2$ generators equipped with the uniform measure $\upsilon$ has a \textit{full entropy realization}, i.e. $\Ent(\mathbb{F}_r, \upsilon)=[0,h(\mathbb{F}_r,\upsilon)]$ where $h(\Gamma,\tau):=\sum_{\gamma\in\Gamma}-\log(\tau(\gamma)) \tau(\gamma)$ denotes the \textit{random walk entropy}. The random walk entropy is always a natural upper bound for the entropy spectrum. 
Nevo  showed in \cite{nevo} that an infinite  countable group with property $(T)$ will always have a so-called \textit{entropy gap} for any generating probability measure on the group, that is there is $\epsilon>0$ (depending on the measure) such that any stationary ergodic space with entropy less than $\epsilon$ has entropy zero. 
 Examples of groups without an entropy gap are for instance finitely generated virtually free groups with generating probability  measures  which have a finite first moment, shown by Hartman-Tamuz  in \cite{YairTamuz}. 
  The question about the shape of $\Ent(\Gamma,\tau)$ is still widely open for many measured groups. 
 \\Here we shall investigate the shape of $\BndEnt(\Gamma,\tau)$ for a concrete  group $\Gamma$ where we choose different measures $\tau$ to design various  boundary entropy spectra.
   Note that $$\BndEnt(\Gamma,\tau)\subseteq \Ent(\Gamma,\tau).$$ 
 In our previous work \cite{BHO1}, joint with  Michael Bj\"orklund and Yair Hartman, we explicitly constructed random walks on a certain class of groups such that their boundary entropy spectra realize any given subsum sets. In particular, we constructed two measures $\tau_1$ and $\tau_2$ on the same  group $\Gamma$ such that $\BndEnt(\Gamma, \tau_1)=[0,\poi(\Gamma,\tau_1)]$ while $\BndEnt(\Gamma, \tau_2)$ is a Cantor set. The groups considered there were countable infinite direct sums of certain countable groups. In this short note we consider a group which is not a countable infinite direct sum and prove a certain subsum-realization for the boundary entropy spectrum using slightly different methods.\\
Given  finitely many distinct primes, $p_1,\ldots,p_l$, we  study the group, the semi direct product
 $$
 \Gamma:=\mathbb{Z}\Big[\frac{1}{p_1},\ldots,\frac{1}{p_{\nl}}\Big]\rtimes  \{p_1^{\n_1}\cdots p_{\nl}^{\n_{\nl}} \, : \, \n_i\in\mathbb{Z}\}.
 $$
\begin{theorem}\label{thm: main}
Let $\beta_{1},\ldots,\beta_{\nl}$ be positive real numbers. Then 
there exists a finitely supported generating probability measure $\tau$ on $\Gam$ 
such that 
$$
\BndEnt(\Gamma,\tau)=\Big\{\sum_{j\in \J}\beta_j\, : \, \J\subseteq \{1,\ldots, \nl\} \Big\}.
$$ 
\end{theorem}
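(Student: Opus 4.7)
The plan is to adapt the strategy of \cite{BHO1} to the semidirect-product structure of $\Gamma$. First, I would construct $\tau$ as a convex combination $\tau = \sum_{j=1}^{l}\alpha_j\mu_j$, where each $\mu_j$ is finitely supported, its multiplicative projection lives on powers of $p_j$ alone, and its additive part is drawn from a finite subset of $\mathbb{Z}$ (so that it lies in every $\mathbb{Z}_{p_i}$). The support sizes and the weights $\alpha_j$ are tuned so that the auxiliary $\mu_j$-walk has Poisson boundary $\mathbb{Z}_{p_j}$ equipped with Haar measure, of Furstenberg entropy $\beta_j/\alpha_j$; any positive real value is accessible by enlarging the range of the additive coordinate and choosing an appropriate power of $p_j$.

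For each subset $J \subseteq \{1,\ldots,l\}$, I would then exhibit a candidate $\tau$-boundary $(Y_J,\eta_J)$ by setting $Y_J = \prod_{j \in J}\mathbb{Z}_{p_j}$, with $\Gamma$ acting through the diagonal embedding $\mathbb{Z}[1/p_1,\ldots,1/p_l]\hookrightarrow\prod_{j\in J}\mathbb{Q}_{p_j}$ together with multiplicative scaling, and taking $\eta_J = \bigotimes_{j\in J}\text{Haar}_{\mathbb{Z}_{p_j}}$. Stationarity holds because $p_i$ is a unit in each $\mathbb{Z}_{p_j}$ with $j \neq i$, and integer additive shifts preserve the Haar measures. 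The boundary property follows from the positive $\tau$-drift in each multiplicative direction $p_j$ with $j \in J$, which forces $\gamma_1 \cdots \gamma_n \eta_J$ to contract onto Dirac masses in each factor.

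The entropy computation then reduces to $h_\tau(Y_J,\eta_J) = \sum_i \alpha_i\, h_{\mu_i}(Y_J,\eta_J)$ by linearity of the Furstenberg entropy in $\tau$. For $i \notin J$ the measure $\mu_i$ preserves $\eta_J$ coordinate-wise, so its contribution vanishes; for $i \in J$ only the $p_i$-coordinate is affected, and the corresponding term equals $\alpha_i h_{\mu_i}(\mathbb{Z}_{p_i},\text{Haar}_{\mathbb{Z}_{p_i}}) = \beta_i$ by construction. Summing yields $h_\tau(Y_J,\eta_J) = \sum_{j\in J}\beta_j$ as required.

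The main obstacle will be to show that these $Y_J$ exhaust \emph{all} $\tau$-boundaries, so that the entropy spectrum equals the full subsum set rather than merely containing it. Here I would identify $Y_{\{1,\ldots,l\}}$ as (a model of) the Poisson boundary of $(\Gamma,\tau)$ using the standard $p$-adic compactification arguments for Baumslag-Solitar-type random walks, and then classify its $\Gamma$-equivariant measurable factors. The product structure of $\eta_{\{1,\ldots,l\}}$, combined with rigidity properties of the $p$-adic $\Gamma$-actions (ergodicity of each coordinate factor under the subgroup stabilizing the others, in the spirit of the stiffness arguments of \cite{BHO1}), should force every factor to be a coordinate projection $\prod_{j\in J}\mathbb{Z}_{p_j}$ for some $J\subseteq\{1,\ldots,l\}$, giving the desired bijection between subsets and $\tau$-boundaries and completing the proof.
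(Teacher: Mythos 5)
Your overall architecture (build $\tau$ with prescribed per-prime drifts, exhibit the candidate boundaries $\prod_{j\in \J}\mathbb{Q}_{p_j}$, compute entropies coordinatewise, then show these exhaust all boundaries) matches the paper's, but two steps break down. The concrete error is the claim that $\eta_{\J}=\bigotimes_{j\in \J}\mathrm{Haar}_{\mathbb{Z}_{p_j}}$ is $\tau$-stationary. Take $A=\{x: x_j\in\mathbb{Z}_{p_j}\}$ and $g=(\rr,\s)$ with $\rr\in\mathbb{Z}$: then $g^{-1}A=\{x: x_j\in \s^{-1}\mathbb{Z}_{p_j}\}$, so $\eta_{\J}(g^{-1}A)=1$ when $\Vert\s\Vert_{p_j}\le 1$ and $\eta_{\J}(g^{-1}A)=\Vert\s\Vert_{p_j}^{-1}<1$ when $\Vert\s\Vert_{p_j}>1$. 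Since any generating $\tau$ must charge some $(\rr,\s)$ with $\Vert\s\Vert_{p_j}>1$ (the multiplicative projection must generate $\Ss\cong\mathbb{Z}^{\nl}$ as a semigroup), one gets $\tau*\eta_{\J}(A)<1=\eta_{\J}(A)$, so stationarity fails for every admissible choice --- this is not a tuning issue. The true stationary measure $\nu$ (unique, by Brofferio) is only absolutely continuous with respect to the product of Haar measures and is in general neither a product measure nor compactly normalizable in this way. Consequently your entropy computation, which uses the exact Haar form of $\eta_{\J}$ and exact coordinatewise invariance under $\mu_i$ for $i\notin \J$, does not go through as stated. The paper handles precisely this difficulty with an Anosov-type cancellation lemma: if $\nu\ll\xi$ and both information functions are integrable, then $\int I_{\nu}\,d(\mu\otimes\nu)=\int I_{\xi}\,d(\mu\otimes\nu)$, which lets one replace the unknown Radon--Nikodym cocycle of $\nu$ by the explicit modular cocycle $\Vert\s\Vert_{p_j}$ of the $\sigma$-finite Haar reference measures and recover $h_{\tau}(\Y_{\J},\eta_{\J})=\sum_{j\in\J}(-\phi_{p_j}(\tau))$. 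You would need this lemma (or a substitute) to rescue your computation; note also that with it the drift, not a per-coordinate boundary entropy of an auxiliary walk, is what you should normalize to $\beta_j$.

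The second gap is the exhaustion step, which you correctly flag as the main obstacle but leave as a hope. ``Ergodicity of each coordinate under the stabilizer of the others'' does not by itself rule out measurable $\Gamma$-factors of $(\prod_j\mathbb{Q}_{p_j},\nu)$ that are not coordinate projections; classifying measure-theoretic factors of a non-homogeneous measure under a countable dense group is exactly the hard point. The paper's mechanism is to choose $\tau$ to be $\Lam$-absorbing for $\Lam=\mathbb{Z}\rtimes\{1\}$, pass to the Schlichting completion $\G=\prod_i\mathbb{Q}_{p_i}\rtimes\Ss$ of the Hecke pair $(\Gam,\Lam)$, use the transfer results of \cite{BHO1} to promote every $\Gamma$-factor of the Poisson boundary to a $\G$-factor, and then classify $\G$-factors of the homogeneous space $\G/\hat{\Ss}$ by closed subgroups containing $\hat{\Ss}$ (which are forced to be $\prod_{j\in\J^c}\mathbb{Q}_{p_j}\rtimes\Ss$ by density of $\mathbb{Z}[\frac{1}{p_1},\ldots,\frac{1}{p_{\nl}}]$ in each $\mathbb{Q}_{p_j}$). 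Some such topological-group input, or an equally concrete substitute, is needed; without it your proof establishes only the inclusion $\BndEnt(\Gamma,\tau)\supseteq\{\sum_{j\in\J}\beta_j\}$, and even that only after the stationarity issue above is repaired.
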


\subsection{Organization of the paper and ingredients of the proof of Theorem~\ref{thm: main}} 
In Section~\ref{sec: obs}  we express the entropy of   probability measures
which are absolutely continuous w.r.t. $\sigma$-finite product measures as  sums of averaged \textit{information functions}  of the single measures of the product (see Section~2 and ~3 for details and definitions). Our proof uses a  result by Anosov in \cite{anosov}. Further in  Corollary~\ref{cor: factor} we will see that the boundaries in concern meet the requirements for these entropy formulae. This is done by passing to a homogeneous setting of a certain  locally compact, totally disconnected measured group by  techniques we have developed in  our previous work \cite{BHO1} with Michael Bj\"orklund and Yair Hartman.
We will apply a theorem by Brofferio in \cite{broff fin}, stating that  the ``maximal'' boundary - the so-called \textit{Poisson boundary} $\poi(\Gamma,\tau)$ (see Definition~\ref{def: poi}) -  of a finitely generated subgroup $\Gamma$ of the affine group  $\mathbb{Q}\rtimes \mathbb{Q}^*$ equipped with a finitely supported,  generating probability measure $\tau$ 
can be realized as  $$ \poi(\Gamma,\tau)=\Big(\prod_{\phi_p(\tau)<0} \mathbb{Q}_{p},\nu\Big)
$$
where $\nu$ is the uniquely $\tau$-stationary probability measure on that space and $\phi_p(\tau)$ denotes the \textit{$p$-drift} for $\tau$ (see Section~\ref{sec: ex} for a definition) running over all primes $p$ including infinity (with $\mathbb{Q}_{\infty}:=\mathbb{R}$). Note that the above product could be empty as well - e.g. if $\tau$ is symmetric -  in which case we would see the one-point-space with the Dirac measure.

\subsection{Acknowledgments} I am very grateful to my supervisor Michael Bj\"orklund for pointing out Anosov's statement in Lemma~\ref{lem: anosov} and sharing many  ideas which are crucial for the proofs in this paper. I am deeply thankful to Yair Hartman for supporting me throughout my mathematical career, inspiring me and helping me in enormous exciting and fruitful mathematical discussions.
The author was partially supported by Austrian Science Fund FWF: P31889-N35.

\section{Preliminaries}
Let $\G$ be a locally compact, second countable 
 group and fix a left-Haar
 measure $m_{\G}$ on $\G$. Throughout  let $\mu$ be a probability measure on the Borel $\sigma$-algebra of $\G$ which is \textit{admissible}, that is, it is \textit{generating} in the sense that the semi-group generated by the support of $\mu$ is dense in $\G$ and it is \textit{spread-out}, meaning there exists some convolution power of $\mu$ which is absolutely continuous w.r.t.  $m_{\G}$.
A standard probability space  $(\X,\nu)$ on which $\G$ acts jointly Borel measurably is called \textit{$\G$-space}. The action is  \textit{non-singular} if  $\g\nu$ and $\nu$ have the same null-sets for every $\g\in \G$. In particular, in this case the Radon-Nikodym derivative $\frac{d \g^{-1}\nu}{d\nu}$ exists.
 For these kind of actions Furstenberg \cite{furst} defined a notion of entropy 
\[ h_{\mu}(\X,\nu)=\int_{\G}\int_{\X} -\log\Big(\frac{d\g^{-1}\nu}{d\nu}(\x)\Big)\, d\nu(\x)\, d\mu(\g)
\]
provided that the \textit{information function} 
$$I_{\nu}:(\g,\x)\mapsto \log\Big(\frac{d\g^{-1}\nu}{d\nu}(\x)\Big) $$ lies in $L^1(\G\times \X, \mu\otimes \nu)$.
For our purposes we may extend the definition of the information function to $\sigma$-finite measures on $\X$.

A measure $\nu$ on a measurable $\G$-space $(\X,\mathcal{B})$ is called \textit{$\mu$-stationary} if $\nu=\mu*\nu$, i.e. $$ \nu(A)=\int_{\X} \nu(\g^{-1}A)\, d\mu(\g) $$
for all $A\in \mathcal{B}$.
Note that the assumption on $(\G,\mu)$ to be admissible implies that every $\mu$-stationary measure is in particular non-singular (see e.g. \cite[Lemma 1.2]{rig}).
Moreover, $\mu$-stationary probability measures give rise to measure-preserving transformations on the following product space. 
\begin{lemma}[{\cite[Proposition 1.3]{furm}}]\label{lem: m.p.}
Let $\nu$ be a $\mu$-stationary probability measure on $\X$. 
Then
$T: \G^{\mathbb{N}}\times \X$ given by  $T((\g_n)_{n\geq 1},\x):=((\g_n)_{n\geq 2}, \g_1\x)$ 
is a measurable map which preserves the measure $\mu^{\otimes\mathbb{N}}\otimes \nu$., i.e. $T(\mu^{\otimes\mathbb{N}}\otimes \nu)=\mu^{\otimes\mathbb{N}}\otimes \nu$.
\end{lemma}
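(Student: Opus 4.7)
The statement is essentially a direct consequence of stationarity combined with product measure theory, so the plan is to verify measurability and invariance on a generating $\pi$-system of product-rectangle sets and then extend by a monotone class argument.

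First, I would address measurability. The map $T$ is the composition of two Borel measurable maps: the shift $\sigma:(g_n)_{n\ge 1}\mapsto (g_n)_{n\ge 2}$ on $G^{\mathbb N}$ (which is continuous, hence Borel, since the shift on a countable product of Polish spaces is continuous), and the action map $G\times X\to X$, which is jointly Borel measurable by the standing assumption that $X$ is a $G$-space. More precisely, writing $T((g_n),x)=(\sigma((g_n)),\alpha(g_1,x))$ with $\alpha$ the action, both coordinate projections of $T$ are Borel measurable, so $T$ itself is Borel measurable into the product $\sigma$-algebra.

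For invariance, I would verify $T_\ast(\mu^{\otimes\mathbb N}\otimes\nu)=\mu^{\otimes\mathbb N}\otimes\nu$ by checking that both measures agree on the $\pi$-system of cylinder rectangles
\[
C=A_1\times\cdots\times A_n\times G^{\mathbb N}\times B,
\qquad A_i\in\mathcal B(G),\ B\in\mathcal B(X),
\]
which generates the product $\sigma$-algebra. It is cleaner (and equivalent) to do this at the level of integrals against products of bounded measurable functions $f((g_n),x)=f_1(g_1)\cdots f_n(g_n)h(x)$. Applying Fubini and the definition of $T$,
\[
\int f\circ T\, d(\mu^{\otimes\mathbb N}\otimes\nu)
=\Bigl(\prod_{i=1}^{n}\int f_i\,d\mu\Bigr)\cdot\int_G\!\int_X h(g_1 x)\,d\nu(x)\,d\mu(g_1).
\]
The key step is then the identity $\int_G\int_X h(gx)\,d\nu(x)\,d\mu(g)=\int_X h\,d\nu$, which is exactly the stationarity condition $\mu\ast\nu=\nu$ in its functional form. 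Combining this with the trivial equality of the remaining product factors yields $\int f\circ T\, d(\mu^{\otimes\mathbb N}\otimes\nu)=\int f\, d(\mu^{\otimes\mathbb N}\otimes\nu)$.

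Finally, I would invoke a standard monotone class / Dynkin's $\pi$-$\lambda$ argument: the cylinder rectangles form a $\pi$-system generating the product $\sigma$-algebra, and both $T_\ast(\mu^{\otimes\mathbb N}\otimes\nu)$ and $\mu^{\otimes\mathbb N}\otimes\nu$ are probability measures (both spaces having total mass one) that coincide on this $\pi$-system, so they agree everywhere. There is no real obstacle: the only thing to make sure of is that Fubini is applicable (guaranteed by the $\sigma$-finiteness and joint measurability above) and that stationarity is used in its correct integral form. Once those pieces are aligned, the proof is essentially a one-line computation on cylinders, extended by abstract nonsense.
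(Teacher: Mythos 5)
The paper does not prove this lemma itself; it is quoted directly from Furman's survey (\cite[Proposition 1.3]{furm}) without proof. Your argument --- measurability via composition of the shift with the jointly measurable action, verification of invariance on cylinder rectangles using stationarity in its integral form $\int_{\G}\int_{\X} h(\g\x)\,d\nu(\x)\,d\mu(\g)=\int_{\X}h\,d\nu$, and extension by a $\pi$-$\lambda$ argument --- is correct and is essentially the standard proof of this fact as found in the cited reference.
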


The above introduced notion of entropy is of particular interest for the following spaces.

\begin{definition}\label{def: poi} A compact probability space $(\X,\nu)$ on which $\G$ acts continuously with $\nu$ being $\mu$-stationary is called a  \textit{compact $\mu$-boundary} if   $$
\g_1\cdots g_n \nu\to \delta_{\x((\g_i)_{i\geq 1})} \ \text{ for $\mu^{\otimes \mathbb{N}}$-almost every $(\g_i)_{i\geq 1} \in\G^{\mathbb{N}}$}
$$ in the weak*-topology, with $\x((\g_i)_{i\geq 1})\in\X$. 
A probability space which is measure-theoretical $\G$-equivariantly isomorphic to a compact  $\mu$-boundary is called a  \textit{$\mu$-boundary}.
The \textit{Poisson boundary} is the maximal such boundary in the sense that every other $\mu$-boundary is a \textit{$\G$-factor} of it, i.e. there exist $\G$-equivariant measure-theoretical factor maps from the Poisson boundary of $(\G,\mu)$ to all $\mu$-boundaries. (Its existence is provied by a result of Furstenberg, see e.g. \cite{furst}). We will write $\poi(\G,\mu)$ to denote the Poisson boundary of $(G,\mu)$.
\end{definition}

\section{Entropy observations}\label{sec: obs}
\subsection{Absolutely continuous measures provide equal information ``on average''}\label{subsec: info}
Let $\G$ be a locally compact, second countable group equipped with an admissible probability measure $\mu$, and let $(\X,\nu)$ be a $\G$-space. We say $\nu$ is \textit{absolutely continuous} w.r.t. a measure $\xi$ on $\X$ and write $\nu\ll \xi$, if every $\xi$-null-set is as well a $\nu$-null-set. 
\begin{proposition}\label{prop: I}
Let $\nu$ be a $\mu$-stationary probability measure on $\X$ and let $\xi$ be a non-singular $\sigma$-finite measure on $\X$. 
If $\nu\ll\xi$ 
and  if $I_{\nu}$ and $I_{\xi}$ are both in $ L^1(\mu\otimes \nu)$, then 
$$\int_{\G}\int_{\X} I_{\nu}(\g,\x) \, d\nu(\x) \, d\mu(\g) = \int_{\G}\int_{\X}I_{\xi}(\g,\x) \, d\nu(\x) \, d\mu(\g).$$
\end{proposition}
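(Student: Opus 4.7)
The plan is to rewrite $I_\nu - I_\xi$ as a coboundary over the measure-preserving skew product from Lemma~\ref{lem: m.p.}, and then invoke Anosov's lemma to conclude that this integrable coboundary has integral zero. Set $f := d\nu/d\xi$, which exists and is strictly positive $\nu$-almost everywhere since $\nu\ll\xi$ and $\nu$ is a probability measure. Note that $\nu$ is non-singular (as $\mu$ is admissible and $\nu$ is $\mu$-stationary), and $\xi$ is non-singular by hypothesis, so all Radon-Nikodym derivatives appearing below are well defined.

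A direct Radon-Nikodym chain rule gives, for $\mu$-a.e.\ $\g$ and $\nu$-a.e.\ $\x$,
$$\frac{d\g^{-1}\nu}{d\nu}(\x) \;=\; \frac{d\g^{-1}\nu}{d\g^{-1}\xi}(\x)\cdot \frac{d\g^{-1}\xi}{d\xi}(\x)\cdot \frac{1}{f(\x)}.$$
The first factor equals $f(\g\x)$, by the change-of-variables $y=\g\x$ applied to $\g^{-1}\nu(A)=\int_{\g A} f\, d\xi$; the middle factor is $e^{I_\xi(\g,\x)}$; and $d\xi/d\nu = 1/f$ on the full $\nu$-measure set $\{f>0\}$. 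Taking logarithms yields
$$I_\nu(\g,\x) \,-\, I_\xi(\g,\x) \;=\; \log f(\g\x) - \log f(\x).$$

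Define $F:\G^{\mathbb{N}}\times \X\to \mathbb{R}$ by $F((\g_n)_{n\geq 1},\x):=\log f(\x)$, and let $T$ be the shift from Lemma~\ref{lem: m.p.}. The identity above reads $I_\nu - I_\xi = F\circ T - F$ (as a function on $\G^{\mathbb{N}}\times\X$ that depends only on the first two ``coordinates''). By hypothesis the left side is in $L^1(\mu\otimes\nu)$, so $F\circ T - F \in L^1(\mu^{\otimes\mathbb{N}}\otimes\nu)$. Since $T$ preserves the probability measure $\mu^{\otimes\mathbb{N}}\otimes\nu$, Anosov's lemma applies and gives $\int (F\circ T - F)\, d(\mu^{\otimes\mathbb{N}}\otimes\nu)=0$, which, after projecting back to $\G\times\X$, is exactly the claimed equality.

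The delicate point — and the reason Anosov's lemma is needed rather than a direct appeal to $\mu$-stationarity via $\nu=\mu*\nu$ — is that $\log f$ is in general not integrable against $\nu$, so one cannot split the telescoping identity into two separately convergent integrals and cancel them. Anosov's lemma is tailored precisely to this situation: an integrable coboundary of a probability-measure-preserving transformation integrates to zero, with no integrability assumption on the ``primitive'' $F$. Verifying the chain-rule identity carefully and packaging the coboundary on the right probability space is essentially all there is to the argument.
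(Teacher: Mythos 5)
Your proof is correct and follows essentially the same route as the paper: the chain rule for Radon--Nikodym derivatives gives $I_\nu - I_\xi = \log f(\g\x) - \log f(\x)$, and this coboundary over the measure-preserving skew product of Lemma~\ref{lem: m.p.} integrates to zero by Anosov's lemma. The only cosmetic difference is that the paper packages the coboundary step as a separate corollary (Corollary~\ref{cor: int zero}), which you inline.
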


The proof of the above Proposition~\ref{prop: I} relies on the following old observation by Anosov \cite{anosov} and will be provided at the end of this subsection.

\begin{lemma}[{\cite[Theorem 1]{anosov}}]\label{lem: anosov}
Let $(\Z,\cZ,\zeta)$ be a probability space and $T:\Z\to \Z$ be a measure-preserving transformation on $\Z$, i.e. $\zeta\circ T^{-1}=\zeta$. Given a measurable function $f:\Z\to \mathbb{R}$ such that $f-f\circ T$ lies in $L^1(\Z,\zeta)$, then 
$$ \int_{\Z}(f-f\circ T)(z)\, d\zeta(\z)=0.
$$
 \end{lemma}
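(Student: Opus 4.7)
The plan is to reduce to the trivial case where $f$ itself is integrable by truncating $f$ and invoking dominated convergence. Note that if $f$ happened to lie in $L^1(\zeta)$, then the statement would be immediate: measure-preservation of $T$ gives $\int f\circ T\, d\zeta=\int f\, d\zeta$, and subtracting yields $\int(f-f\circ T)\, d\zeta=0$. The only content of the lemma is that one is not assuming $f\in L^1(\zeta)$, only that the \emph{difference} $f-f\circ T$ is in $L^1(\zeta)$.

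The first step is to introduce the truncation $f_n := \max(-n,\min(n,f))$ for each integer $n\geq 1$. Since $|f_n|\leq n$ and $\zeta$ is a probability measure, $f_n\in L^1(\zeta)$, so the trivial case applies and gives
$$ \int_{\Z}\bigl(f_n - f_n\circ T\bigr)(\z)\, d\zeta(\z)=0 \quad \text{for every } n\geq 1. $$
The second step is to pass to the limit as $n\to\infty$. Pointwise, for each $\z\in \Z$, once $n$ exceeds both $|f(\z)|$ and $|f(T\z)|$ the truncation is trivial, so $f_n(\z)-f_n(T\z)\to f(\z)-f(T\z)$.

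The third step is to produce an $L^1$ dominating function for the sequence $f_n-f_n\circ T$. Since the truncation map $t\mapsto \max(-n,\min(n,t))$ is $1$-Lipschitz, one has the pointwise bound
$$ \bigl|f_n(\z)-f_n(T\z)\bigr|\leq \bigl|f(\z)-f(T\z)\bigr|, $$
and the right-hand side lies in $L^1(\Z,\zeta)$ by hypothesis. Dominated convergence then yields
$$ \int_{\Z}\bigl(f-f\circ T\bigr)(\z)\, d\zeta(\z)=\lim_{n\to\infty}\int_{\Z}\bigl(f_n - f_n\circ T\bigr)(\z)\, d\zeta(\z)=0. $$

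There is no serious obstacle here; the only subtle point is recognising that one must not try to split the integral as $\int f\, d\zeta - \int f\circ T\, d\zeta$ (both may be $\pm\infty$ or undefined), and that the Lipschitz bound on the truncation is exactly what allows the differences to inherit the integrability of $f-f\circ T$ without assuming anything about $f$ individually.
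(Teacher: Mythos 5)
Your proof is correct, and it takes a genuinely different route from the paper's. The paper applies Birkhoff's ergodic theorem to $h:=f\circ T-f\in L^1(\zeta)$: the ergodic averages telescope to $\frac{1}{n}\bigl(f(T^n\z)-f(\z)\bigr)$, and measure-preservation shows $f(T^n\cdot)/n\to 0$ in measure, so along a subsequence almost everywhere; hence the Birkhoff limit $\mathbb{E}[h\mid\mathcal{I}(T)]$ vanishes a.e.\ and in particular $\int h\,d\zeta=0$. Your argument replaces all of this with the truncation $f_n=\max(-n,\min(n,f))$, the observation that $f_n\in L^1$ on a probability space so that $\int(f_n-f_n\circ T)\,d\zeta=0$ trivially, and the $1$-Lipschitz bound $|f_n-f_n\circ T|\le|f-f\circ T|$ which licenses dominated convergence. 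Each step checks out: the domination is exactly right, the pointwise convergence is immediate since $f$ is real-valued, and the finiteness of $\zeta$ is used precisely where it must be (the lemma fails on infinite measure spaces, e.g.\ $f=1_{[0,\infty)}$ under the shift on $\mathbb{R}$). Your route is more elementary and self-contained, requiring nothing beyond the dominated convergence theorem; the paper's route is heavier but yields the slightly stronger conclusion that the conditional expectation of $f-f\circ T$ on the $T$-invariant $\sigma$-algebra vanishes, i.e.\ the integral is zero over every invariant set, not just over all of $\Z$. For the use made of the lemma in Corollary~\ref{cor: int zero} only the integral identity is needed, so your proof would suffice there.
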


\begin{proof}
Let us denote $h:=f\circ T-f$. By Birkhoff's Ergodic Theorem  for $\zeta$-a.e. $\z\in \Z$ $$ \mathbb{E}[h\vert \mathcal{I}(T)](\z)=\lim\limits_{n\to\infty}\frac{1}{n}\sum_{k=0}^{n-1}h(T^{k}(\z)) = \lim\limits_{n\to\infty}\frac{1}{n}(f(T^{n}(\z)-f(\z))=  \lim\limits_{n\to\infty}\frac{1}{n} f(T^{n}(\z)),$$
where $\mathcal{I}(T)$ denotes the $\sigma$-algebra of  $T$-invariant sets in $\cZ$, and $ \mathbb{E}[h\vert \mathcal{I}(T)]$ stands for the conditional expectation of $h$ w.r.t. this $\sigma$-algebra.
Moreover, for every $\epsilon>0$ we see that $$\zeta\Big(\Big\{\z\in\Z\, : \, \Big\vert\frac{ f(T^{n}(\z))}{n}\Big\vert >\epsilon\Big\}\Big)=
\zeta\Big(\Big\{\z\in\Z\, : \, \Big\vert\frac{ f(\z)}{n}\Big\vert >\epsilon\Big\}\Big)
\to 0
$$
by $T$-invariance of $\zeta$. 
Hence
(by using Borel-Cantelli)
 there exists a subsequence $(n_l)_{l\in\mathbb{N}}$ such that $\frac{ f(T^{n_l}(\z))}{n}\to 0$ for $l\to\infty$, for $\zeta$-a.e. $\z\in \Z$. Thus, $$0=\mathbb{E}[h\vert \mathcal{I}(T)]$$ and in particular
$$0=\int_{\Z} h\, d\zeta
$$ since $\mathbb{E}[h\vert \mathcal{I}(T)]\in L^1(Z,\zeta)$.
\end{proof}

\begin{corollary}\label{cor: int zero} Let $\nu$ be a $\mu$-stationary probability measure on $\X$. For a measurable function 
$u: \X\to \mathbb{R}$  such that $f(\g,\x):=u(\x)-u(\g\x)$ belongs to $L^{1}(\G\times \X,\mu\otimes \nu)$ we have 
$$\int_{\G}\int_{\X} f(\g,\x)\, d\nu(\x)\,  d\mu(\g)=0.$$
\end{corollary}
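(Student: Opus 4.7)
The statement is a direct corollary of Anosov's Lemma~\ref{lem: anosov} applied to the measure-preserving system from Lemma~\ref{lem: m.p.}. The plan is to lift the function $u$ on $\X$ to a function on the product space $\G^{\mathbb{N}}\times \X$ that depends only on the $\X$-coordinate, and observe that the difference with its shift by $T$ is precisely $f$ evaluated on the first coordinate.

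\textbf{Main steps.} First, I would set $\Z := \G^{\mathbb{N}}\times \X$, $\zeta := \mu^{\otimes \mathbb{N}}\otimes \nu$, and let $T$ be the transformation from Lemma~\ref{lem: m.p.}, so that $T$ preserves $\zeta$. Next, define $F:\Z\to \mathbb{R}$ by $F((\g_n)_{n\geq 1},\x) := u(\x)$. Then by the definition of $T$,
$$
(F - F\circ T)((\g_n)_{n\geq 1},\x) \;=\; u(\x)-u(\g_1\x) \;=\; f(\g_1,\x).
$$
Since the right-hand side depends only on $(\g_1,\x)$, Fubini gives
$$
\int_{\Z} |F-F\circ T|\, d\zeta \;=\; \int_{\G}\int_{\X} |f(\g,\x)|\, d\nu(\x)\, d\mu(\g) \;<\; \infty
$$
by the assumption on $f$. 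So $F - F\circ T\in L^1(\Z,\zeta)$, the hypothesis of Lemma~\ref{lem: anosov} is satisfied, and we conclude
$$
0 \;=\; \int_{\Z}(F-F\circ T)\, d\zeta \;=\; \int_{\G}\int_{\X} f(\g,\x)\, d\nu(\x)\, d\mu(\g),
$$
again by Fubini.

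\textbf{Obstacle.} There is essentially no obstacle once one notices the right lift. The only subtle point is that $F$ itself need not be integrable (nothing is assumed about $u\in L^1$), which is exactly why one needs Anosov's version of the invariance identity rather than a naive argument that splits $\int F\, d\zeta - \int F\circ T\, d\zeta$. The integrability of the difference $F-F\circ T$ alone is enough, and this is precisely what the hypothesis $f\in L^1(\G\times \X,\mu\otimes \nu)$ provides.
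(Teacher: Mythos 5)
Your proposal is correct and follows the same route as the paper: lift $u$ to $\widetilde{u}=u\circ\pr_{\X}$ on $\G^{\mathbb{N}}\times\X$, observe that $\widetilde{u}-\widetilde{u}\circ T$ equals $f(\g_1,\x)$, verify integrability via Fubini, and apply Anosov's lemma to the measure-preserving system from Lemma~\ref{lem: m.p.}. Your remark about why $F$ itself need not be integrable is a nice articulation of exactly why Anosov's lemma is invoked.
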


\begin{proof} 
Since $\nu$ is $\mu$-stationary,  by Lemma~\ref{lem: m.p.} we can consider the  measure-preserving transformation  $T: \G^{\mathbb{N}}\times \X$ defined as $T((\g_n)_{n=1}^{\infty},\x):=((\g_n)_{n=2}^{\infty}, \g_1\x)$.
Let us set $\widetilde{u}:=u\circ \pr_{\X}$ where $\pr_{\X}: \G^{\mathbb{N}}\times \X\to \X$ is the projection to $\X$. Then clearly, for $\g_1=\g$ we can write $$u(\x)-u(\g\x)=\widetilde{u}(\x)- \widetilde{u}(T((g,g_2,g_3,\ldots) 
,\x)).$$ Now to apply Lemma~\ref{lem: anosov}, it is left to show that
$$\widetilde{u}- \widetilde{u}\circ T\in L^{1}(\G^{\mathbb{N}}\times \X, \mu^{\otimes \mathbb{N}}\otimes \nu).$$
Indeed,  $\widetilde{u}$ does only depend on the first coordinate of $\G$ and by assumption $u-u\circ g\in L^{1}(\G\times \X,\mu\otimes \nu)$, which ends the proof.
\end{proof}

\begin{proof}[Proof of Proposition~\ref{prop: I}]
First note that  $u:=\frac{d\nu}{d\xi}>0$, $\nu$-almost everywhere. So
$$I_{\nu}(\g,\x)=\log\Big( \frac{u(g(\x)) d\g^{-1}\xi}{ u(\x)d\xi}(\x)\Big)=-\log(u(\x)) + \log(u(\g(\x))  +I_{\xi}(\g,\x)
$$ for $\mu\otimes \nu$-a.e. $(\g,\x)\in \G\times \X$. 
By Corollary~\ref{cor: int zero} we see that 
$$ \int_{\G\times \X} \log(u(\x))-\log(u(\g(\x)) \, d  \mu\otimes \nu(\g,\x)=0
$$
because $\log\circ u - \log\circ u\circ g = I_{\nu}-I_{\xi}\in L^{1}(\G\times \X, \mu\otimes \nu)$ by assumption. Thus, we obtain 
$$ \int_{\G\times \X} I_{\nu} \, d  \mu\otimes \nu=\int_{\G\times \X} I_{\xi} \, d  \mu\otimes \nu.
$$
\end{proof}

\subsection{An entropy formula for certain measures}\label{subsec: formula}
In  case when the $\mu$-stationary probability measure $\nu$ is absolutely continuous w.r.t. to a product measure, we can rewrite the entropy as sums of averaged information functions of every single measure of the product.

\begin{proposition}\label{prop: entropy formula} Let $\X=\prod_{i=1}^{\nl} X_i 
$ and $\G$ act diagonally on $\X$ such that  $\nu$ is $\mu$-stationary. Given non-singular $\sigma$-finite measures $\xi_i$ on $\X_i$ such that $\nu \ll \bigotimes_{i=1 
}^{\nl} \xi_i$, then $$
h_{\mu}(\X, \nu)= \sum_{i=1}^{\nl} \int_{\G} \int_{X} 
I_{\xi_i}(\g,\x_i)  \, d\nu ((\x_n)_{n=1}^{\nl})\, d \mu(\g)
$$
provided $I_{\nu}, I_{\xi_i 
}\in L^1(\mu\otimes \nu)$ for all $i=1,\ldots, \nl$.\\
In particular, if for every $\g\in\G$, for every $i\in \{1,\ldots,\nl\}$ there exists a $\xi_i$-conull set $\X_i^{\prime}\subseteq \X_i$ such that  
 $\frac{d\g^{-1} \xi_i}{d\xi_i}(\x_i) = \Delta_i(\g)$ for all  $\x_i\in\X_i^{\prime}$, then 
$$h_{\mu}(\X, \nu)=\sum_{i 
=1}^{\nl}
\int_{\G} -\log(\Delta_i(g)) \,  d \mu(g). 
$$
\end{proposition}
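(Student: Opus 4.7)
The plan is to apply Proposition~\ref{prop: I} with the $\sigma$-finite product measure $\xi := \bigotimes_{i=1}^{\nl}\xi_i$ on $\X$, and then use that the Radon--Nikodym derivative of a product of measures factors multiplicatively so its logarithm decouples into a sum. The measure $\xi$ is non-singular because the $\G$-action is diagonal and each $\xi_i$ is non-singular, and by hypothesis $\nu\ll \xi$.

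First, I would establish the pointwise factorization: since the action is diagonal, for each $\g\in \G$ we have $\g^{-1}\xi = \bigotimes_{i=1}^{\nl}\g^{-1}\xi_i$, so
$$\frac{d\g^{-1}\xi}{d\xi}(\x_1,\ldots,\x_\nl) \;=\; \prod_{i=1}^{\nl}\frac{d\g^{-1}\xi_i}{d\xi_i}(\x_i)$$
for $\xi$-a.e. $\x$, hence also for $\nu$-a.e. $\x$. Taking logarithms yields the pointwise identity $I_\xi(\g,\x) = \sum_{i=1}^{\nl} I_{\xi_i}(\g,\x_i)$, which lies in $L^1(\mu\otimes \nu)$ as a finite sum of $L^1$ functions. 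Proposition~\ref{prop: I} then gives $\int I_\nu\, d(\mu\otimes \nu) = \int I_\xi\, d(\mu\otimes \nu)$, and combining this with the definition $h_\mu(\X,\nu) = -\int I_\nu\, d(\mu\otimes \nu)$ and the above sum decomposition produces the first identity (after tracking the sign convention for $I_{\xi_i}$ and the minus sign in the entropy definition).

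For the \emph{in particular} statement, the key extra ingredient is that $\sigma$-finiteness of the $\xi_j$'s combined with $\nu\ll \bigotimes_{j=1}^{\nl}\xi_j$ implies that the $i$-th marginal $\pi_{i,*}\nu$ on $\X_i$ is absolutely continuous w.r.t. $\xi_i$. Consequently, for each fixed $\g\in\G$ the $\xi_i$-conull set $\X_i^{\prime}$ on which $\frac{d\g^{-1}\xi_i}{d\xi_i}(\x_i)=\Delta_i(\g)$ is also $\pi_{i,*}\nu$-conull, so $I_{\xi_i}(\g,\cdot) = \log\Delta_i(\g)$ for $\nu$-a.e. $\x$. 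Integrating over $\x$ collapses the inner integral to $\log\Delta_i(\g)$, and summing over $i$ yields the stated formula. The only mildly delicate step is this marginal absolute continuity argument, which essentially uses the $\sigma$-finiteness hypothesis; once that is in place the rest is a bookkeeping exercise combining Proposition~\ref{prop: I}, the log-product identity, and Fubini.
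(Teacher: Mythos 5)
Your proposal is correct and follows essentially the same route as the paper: apply Proposition~\ref{prop: I} with $\xi=\bigotimes_{i=1}^{\nl}\xi_i$ and then factor the Radon--Nikodym derivative via the product rule (the paper's Lemma~\ref{lem: rn}) so that $I_\xi=\sum_i I_{\xi_i}$. Your additional observation that $\sigma$-finiteness and $\nu\ll\bigotimes_i\xi_i$ force the marginals of $\nu$ to be absolutely continuous w.r.t.\ the $\xi_i$ is a correct (and welcome) justification of the ``in particular'' clause, which the paper leaves implicit.
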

The proof of the above proposition  follows directly from Proposition~\ref{prop: I} 
 and the next basic fact about Radon-Nikodym derivatives on product spaces. 

\begin{lemma}\label{lem: rn}
Let $\eta_i$ and $\zeta_i$ be $\sigma$-finite measures on $\Y$ and $\Z$, respectively, for $i=1,2$ such that $\eta_1\ll \eta_2$ and $\zeta_1\ll \zeta_2$. Then $$
\frac{d(\eta_1\otimes \zeta_1)}{d(\eta_2\otimes \zeta_2)}(\y,\z)=
 \frac{d\eta_1}{d \eta_2}(\y)  \frac{d\zeta_1}{d \zeta_2}(\z).
$$
\end{lemma}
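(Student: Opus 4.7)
The plan is to verify the claimed identity by checking that the candidate function
$$\rho(\y,\z) := \frac{d\eta_1}{d\eta_2}(\y)\,\frac{d\zeta_1}{d\zeta_2}(\z)$$
satisfies the defining property of the Radon-Nikodym derivative of $\eta_1\otimes\zeta_1$ with respect to $\eta_2\otimes\zeta_2$. First I would note that $\rho$ is a non-negative measurable function on $\Y\times\Z$ as a product of two non-negative measurable functions (each derivative exists by $\eta_1\ll\eta_2$ and $\zeta_1\ll\zeta_2$ together with $\sigma$-finiteness of the dominating measures). Absolute continuity $\eta_1\otimes\zeta_1\ll\eta_2\otimes\zeta_2$ itself would be checked by the standard observation that any $\eta_2\otimes\zeta_2$-null set has $\zeta_2$-null vertical sections for $\eta_2$-a.e.\ $\y$, and hence (by absolute continuity in each factor) also $\zeta_1$-null sections for $\eta_1$-a.e.\ $\y$, so it is $\eta_1\otimes\zeta_1$-null by Tonelli.

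Next I would test $\rho$ against measurable rectangles $A\times B$ with $\eta_2(A),\zeta_2(B)<\infty$. By Tonelli (applicable because $\rho\geq 0$) and the defining property of each factor Radon-Nikodym derivative,
$$\int_{A\times B} \rho\, d(\eta_2\otimes\zeta_2)= \Bigl(\int_A \tfrac{d\eta_1}{d\eta_2}\, d\eta_2\Bigr)\Bigl(\int_B \tfrac{d\zeta_1}{d\zeta_2}\, d\zeta_2\Bigr)=\eta_1(A)\,\zeta_1(B)=(\eta_1\otimes\zeta_1)(A\times B).$$
This matches $\eta_1\otimes\zeta_1$ on the $\pi$-system of finite-measure rectangles.

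Finally, $\sigma$-finiteness of $\eta_2$ and $\zeta_2$ lets me exhaust $\Y\times\Z$ by a countable union of rectangles of finite product measure, so by a standard monotone-class (or Dynkin $\pi$-$\lambda$) argument the measures $E\mapsto\int_E \rho\, d(\eta_2\otimes\zeta_2)$ and $\eta_1\otimes\zeta_1$ agree on the whole product $\sigma$-algebra. Essential uniqueness of the Radon-Nikodym derivative then yields $\rho=\tfrac{d(\eta_1\otimes\zeta_1)}{d(\eta_2\otimes\zeta_2)}$ almost everywhere. The only mildly delicate step is the use of $\sigma$-finiteness to pass from rectangles of finite measure to the whole space; everything else is an immediate consequence of Tonelli and the definition of Radon-Nikodym derivatives.
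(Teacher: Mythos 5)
The paper states Lemma~\ref{lem: rn} as a ``basic fact'' and supplies no proof of it, so there is nothing to compare your argument against line by line; what matters is whether your argument is sound, and it essentially is. The strategy --- check that the candidate density integrates correctly over measurable rectangles via Tonelli, extend to the full product $\sigma$-algebra by a monotone-class or $\pi$-$\lambda$ argument, and conclude by essential uniqueness of Radon--Nikodym derivatives --- is the standard proof of this fact, and your verification of $\eta_1\otimes\zeta_1\ll\eta_2\otimes\zeta_2$ via null sections is also correct. One step deserves tightening: in the final exhaustion you invoke only the $\sigma$-finiteness of $\eta_2$ and $\zeta_2$, but the uniqueness theorem for two measures agreeing on a $\pi$-system needs the exhausting sets to have \emph{finite} measure under the measures being compared, and $\eta_1(A)\,\zeta_1(B)$ can be infinite even when $\eta_2(A)$ and $\zeta_2(B)$ are finite. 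Since $\eta_1$ and $\zeta_1$ are also assumed $\sigma$-finite, this is repaired by intersecting the exhaustions for $\eta_1$ and $\eta_2$ (and likewise for $\zeta_1$ and $\zeta_2$), so that the exhausting rectangles have finite measure for all four measures simultaneously. With that adjustment the proof is complete.
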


\begin{proof}[Proof of Proposition~\ref{prop: entropy formula}]
By Proposition~\ref{prop: I} we know that $$
h_{\mu}(\X,\nu)=\int_{\G}\int_{\X}-\log \Big(\frac{ d \g^{-1}\bigotimes_{i=1}^{\nl}\xi_{i}}{d \bigotimes_{i=1}^{\nl}\xi_{i}} (\x)\Big) \, d\nu(\x)\, d\mu(\g).
$$ Since the action is diagonal we can apply  Lemma~\ref{lem: rn}  and obtain
$$h_{\mu}(\X,\nu)=\sum_{i=1}^{\nl} \int_{\G}\int_{\X}-\log \Big(\frac{ d \g^{-1}\xi_{i}}{d \xi_{i}} (\x_i)\Big) \, d\nu(\x)\, d\mu(\g).
$$
\end{proof}

\section{An application}\label{sec: ex}
 In this section we consider the group $$\Gamma=\mathbb{Z}\Big[\frac{1}{p_1},\ldots,\frac{1}{p_{\nl}}\Big]\rtimes \Ss$$ where $\Ss:=  \{p_1^{\n_1}\cdots p_{\nl}^{\n_{\nl}} \, : \, \n_i\in\mathbb{Z}\}$ for given distinct primes $p_1,\ldots,p_l$.
We shall denote by $\Vert \cdot \Vert_p$ the $p$-adic norm on $\mathbb{Q}$ for every prime number $p$.

Following \cite{broff fin} the \textit{$p$-drift} of a measure $\tau$ on $\Gamma$ is defined as
$$\phi_p(\tau):=\sum_{\s\in \Ss}\pr_2\tau(\s)\log(\Vert \s \Vert_p)
$$ for a prime $p$. 
We say that $\tau$ has \textit{negative drift} if its $p$-drift is negative for every $p\in\{p_1,\ldots,p_{\nl}\}$.

\begin{theorem}[{in \cite[Theorem 1]{broff fin}}]\label{thm: broff} Let $\tau$ be a finitely supported, generating probability measure  on $\Gamma=\mathbb{Z}[\frac{1}{p_1},\ldots,\frac{1}{p_{\nl}}]\rtimes  \{p_1^{\n_1}\cdots p_{\nl}^{\n_{\nl}} \, : \, \n_i\in\mathbb{Z}\}$  with negative drift. 
Then the Poisson boundary of $(\Gam,\tau)$ is given by $$ (\mathbb{Q}_{p_1}\times \ldots \times \mathbb{Q}_{p_{\nl}},\nu)
$$ where $\nu$ is the unique $\tau$-stationary probability measure on this space.
\end{theorem}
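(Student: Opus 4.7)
The statement is Brofferio's theorem, so in the paper itself the proof will presumably be a citation; nonetheless, here is how I would approach reconstructing it.

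My plan is to work with the random walk $(Y_n)_{n\geq 0}$ on $\Gamma$ driven by $\tau$, viewed inside the affine group $\mathbb{Q}\rtimes \mathbb{Q}^{*}$, i.e.\ each element $\g=(\aaa,\s)$ acts on $\mathbb{Q}_p$ (for any prime $p$) by $\x\mapsto \s \x + \aaa$. Writing the increments as $X_i=(A_i,S_i)$ i.i.d.\ $\sim\tau$, one has the familiar product
\[
Y_n = \bigl(A_1 + S_1 A_2 + S_1 S_2 A_3 + \cdots + S_1\cdots S_{n-1}A_n,\ S_1\cdots S_n\bigr).
\]
The first step is to fix a prime $p\in\{p_1,\ldots,p_{\nl}\}$ and to use the negative-drift hypothesis $\phi_p(\tau)<0$. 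Since $\log\Vert S_1\cdots S_n\Vert_p$ is a random walk on $\mathbb{R}$ with negative mean $n\,\phi_p(\tau)$, the strong law forces $\Vert S_1\cdots S_n\Vert_p\to 0$ geometrically fast $\mathbb{P}$-a.s. Because $\tau$ is finitely supported, the $A_i$ are uniformly bounded in $\Vert\cdot\Vert_p$, so the partial sums
\[
Z^{(p)}_n \;:=\; A_1 + S_1 A_2 + \cdots + S_1\cdots S_{n-1}A_n
\]
form a Cauchy sequence in the complete metric space $\mathbb{Q}_p$, hence converge almost surely to a limit $Z^{(p)}_\infty$. Doing this simultaneously in every coordinate gives a measurable $\Gamma$-equivariant map $(\g_i)_{i\geq 1}\mapsto (Z^{(p_1)}_\infty,\ldots,Z^{(p_{\nl})}_\infty)\in\mathbb{Q}_{p_1}\times\cdots\times\mathbb{Q}_{p_{\nl}}$. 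Its push-forward of $\tau^{\otimes\mathbb{N}}$ is, by construction, a $\tau$-stationary probability measure $\nu$ and exhibits $(\prod_i\mathbb{Q}_{p_i},\nu)$ as a $\tau$-boundary.

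Uniqueness of $\nu$ follows from the same contraction argument: two stationary measures $\nu_1,\nu_2$ would have to satisfy $\s\nu_j+\aaa\to\delta_{Z_\infty}$ as the word length goes to infinity, so they must coincide. This also shows that the boundary map above is essentially the only $\Gamma$-equivariant measurable map from the path space to $\prod_i \mathbb{Q}_{p_i}$.

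The last and main step, and the real obstacle, is to prove maximality among $\tau$-boundaries, i.e.\ that this is the full Poisson boundary. Here I would invoke Kaimanovich's strip criterion (or equivalently a ray/strip approximation argument): the reverse walk $\check\tau$ gives, by the same construction applied to those primes $p$ with $\phi_p(\check\tau)<0$, a ``past'' boundary inside the remaining $\mathbb{Q}_p$'s and $\mathbb{R}$, and one checks that the pair (past, future) bound a thin strip of sub-exponentially growing size in $\Gamma$ with respect to a word metric inherited from the embedding into $\mathbb{Q}\rtimes\mathbb{Q}^{*}$. Together with the fact that $\tau$ has finite logarithmic moment (automatic from finite support), Kaimanovich's criterion then upgrades the above $\tau$-boundary to the Poisson boundary. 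Verifying the sub-exponential strip growth in the $S$-arithmetic geometry of $\Gamma$ is the delicate part of the argument and is where Brofferio's work does the heavy lifting.
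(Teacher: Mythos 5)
The paper offers no proof of this statement at all: it is imported verbatim as \cite[Theorem 1]{broff fin}, so there is nothing internal to compare your argument against. Your reconstruction does follow the strategy of Brofferio's cited proof: almost-sure convergence of $Z^{(p)}_n=\sum_k S_1\cdots S_{k-1}A_k$ in each $\mathbb{Q}_{p_i}$ (immediate in an ultrametric field once $\Vert S_1\cdots S_n\Vert_{p_i}\to 0$ by the law of large numbers and the $A_k$ are bounded by finite support), uniqueness of the stationary measure by the contraction/martingale argument, and maximality via Kaimanovich's strip criterion. Two remarks: your uniqueness step is stated loosely --- the clean version is that for any stationary $\nu$ the martingale $Y_n\nu$ converges a.s.\ to $\delta_{Z_\infty}$, forcing $\nu=\mathbb{E}[\delta_{Z_\infty}]$; and by the product formula the reversed walk $\check\tau$ has negative drift only at the archimedean place, so the ``past'' boundary is $\mathbb{R}$ alone, not ``the remaining $\mathbb{Q}_p$'s and $\mathbb{R}$''. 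The genuinely hard content (the subexponential strip estimate in the $S$-arithmetic geometry) is left as a black box in your sketch, which is acceptable here since the theorem is being cited rather than proved.
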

To prove  Theorem~\ref{thm: main} we will view $\mathbb{Q}_{p_1}\times \ldots \times \mathbb{Q}_{p_{\nl}}$ as a homogeneous space  w.r.t.  a completion of $\Gamma$. 
 We can diagonally embed $$\rho:\Gamma \hookrightarrow \G:= \mathbb{Q}_{p_1}\times \ldots \times \mathbb{Q}_{p_{\nl}} \rtimes \Ss
$$ where $\Ss$ denotes $\{p_1^{\n_1}\cdots p_{\nl}^{\n_{\nl}} \, : \, \n_i\in\mathbb{Z}\}$. 
Note that the above embedding has a dense image, since $\mathbb{Z}[\frac{1}{p_1},\ldots, \frac{1}{p_{\nl}}]$ is dense in $\mathbb{Q}_{p_i}$ for every $p_i\in\{p_1,\ldots, p_{\nl}\}$. Indeed,  by the Theorem of Strong Approximation (e.g. \cite[Theorem 5.8]{ram}), for every $a_i\in \mathbb{Q}_{p_i}$, $\forall \epsilon >0$ there exists $x\in \mathbb{Q}$ such that $\Vert x-a_i\Vert_{p_i}<\epsilon$ $\forall i=1,\ldots, \nl$ and $\Vert x\Vert_{p}\leq 1$ for all primes $p$ except $\{p_1,\ldots,p_{\nl}\}$. The latter property implies that $x\in \mathbb{Z}[\frac{1}{p_1},\ldots, \frac{1}{p_{\nl}}]$.
\\

Let us set $$ \X:=\mathbb{Q}_{p_1}\times \ldots \times \mathbb{Q}_{p_{\nl}}.$$ Then, naturally, $\G\curvearrowright \X 
$ 
 continuously 
and  $\X \cong \G/\hat{\Ss}$ for $\hat{\Ss}:= \{0\}\rtimes \Ss $ becomes a  homogeneous space w.r.t. the action of $\G$.
\begin{lemma}\label{lem: homo factor}
Let $\nu$ be a non-singular probability measure on $\X$. Then every $\G$-factor of $(\X,\nu)$ is of the form 
$$\Big(\prod_{j\in \J} \mathbb{Q}_{p_j}, \pi_{\J} \nu
\Big)
$$ for some index subset $\J \subseteq \{1,\ldots,\nl\}$, where  $\pi_{\J}:\X \to \prod_{j\in \J} \mathbb{Q}_{p_j}$ denotes the given factor map. 
\end{lemma}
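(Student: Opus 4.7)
My approach is to reduce the classification of $\G$-factors of $(\X,\nu)$ to a classification of closed intermediate subgroups of $\G$, and then to a classification of closed $\Ss$-invariant subgroups of $\X$. Using the homogeneous structure $\X \cong \G/\hat{\Ss}$, I would invoke the standard correspondence (of the kind developed in \cite{BHO1}) between measurable $\G$-factors of the non-singular $\G$-space $\X$ and closed intermediate subgroups $\hat{\Ss} \leq K \leq \G$: the factor associated to $K$ is the homogeneous quotient $\G/K$, equipped with the natural quotient map and the pushforward of $\nu$. Since $\X$ is normal in $\G$ with complement $\hat{\Ss}$, any such $K$ splits as $K = N \rtimes \Ss$, where $N := K \cap \X$ is a closed subgroup of $\X$ invariant under the multiplicative action of $\Ss$; a short coset computation then identifies $\G/K$ with $\X/N$ as a $\G$-space.

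The heart of the argument is the classification of closed $\Ss$-invariant subgroups $N \leq \X = \prod_{i=1}^{\nl} \mathbb{Q}_{p_i}$: I claim each such $N$ is a coordinate subproduct $\prod_{i \in I} \mathbb{Q}_{p_i}$ for some $I \subseteq \{1,\ldots,\nl\}$. The closed subgroups of $\mathbb{Q}_{p_i}$ are exhausted by $\{0\}$, $p_i^n \mathbb{Z}_{p_i}$ ($n \in \mathbb{Z}$), and $\mathbb{Q}_{p_i}$, and invariance under multiplication by $p_i \in \Ss$ rules out the middle class, leaving $\{0\}$ and $\mathbb{Q}_{p_i}$. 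Setting $I := \{i : \overline{\pi_i(N)} = \mathbb{Q}_{p_i}\}$, the inclusion $N \subseteq \prod_{i \in I} \mathbb{Q}_{p_i}$ is immediate: if $\overline{\pi_i(N)} = \{0\}$, then the $i$-th component of every element of $N$ vanishes. For the reverse inclusion, I would pick $v \in N$ with $v_i \neq 0$ for a fixed $i \in I$ and iteratively kill each other nonzero coordinate $j$ by applying $p_j^{\n}$ and letting $\n \to \infty$: multiplication by $p_j$ contracts the $j$-th coordinate (since $|p_j|_{p_j} < 1$) while acting isometrically on every other coordinate (since $|p_j|_{p_k} = 1$ for $k \neq j$), so compactness of the orbit on the unchanged coordinates yields a convergent subsequence whose limit lies in the closed set $N$. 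After at most $\nl - 1$ iterations one obtains $w \in N$ supported on the $i$-th coordinate only with $w_i \neq 0$, and density of $\mathbb{Z}[\frac{1}{p_1 \cdots p_{\nl}}] \cdot w_i$ in $\mathbb{Q}_{p_i}$ (via strong approximation) then recovers the full coordinate factor $\mathbb{Q}_{p_i} \leq N$.

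Setting $\J := I^c$, the factor $\G/K \cong \X/N$ is canonically identified with $\prod_{j \in \J} \mathbb{Q}_{p_j}$ via the coordinate projection $\pi_{\J}$, and the pushforward of $\nu$ under this identification is exactly $\pi_{\J}\nu$, as claimed. The main obstacle is the iterative killing-of-coordinates step: while each individual operation is a clean $p$-adic calculation, I must track $p$-adic magnitudes across iterations to ensure that the $i$-th component remains nonzero throughout, so that the element extracted at the end is genuinely supported on the desired factor.
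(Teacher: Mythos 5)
Your proof is correct and follows essentially the same route as the paper: both reduce to classifying the closed intermediate subgroups $\hat{\Ss}\le K\le \G$ via the homogeneous-space factor correspondence and then show, using conjugation by $\Ss$ and density of $\mathbb{Z}[\frac{1}{p_1},\ldots,\frac{1}{p_{\nl}}]$ in each $\mathbb{Q}_{p_j}$, that $K\cap\X$ is a coordinate subproduct. Your additional contraction step (applying $p_j^{\n}$ to isolate a single coordinate, the other coordinates keeping constant norm so the $i$-th entry survives) is a harmless refinement of the paper's more direct appeal to the closure of $\rr\cdot\mathbb{Z}[\frac{1}{p_1},\ldots,\frac{1}{p_{\nl}}]$ inside $\X$.
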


\begin{proof}
Let $(\Y,\pi \nu)$ be a measurable $\G$-factor of $(\X,\nu)$ with surjective $\G$-equivariant factor map $\pi$. 
Since $\X=\G/(\{0\}\times \Ss)$,  there exists a closed subgroup $\Cc$ of $\G$ which contains $\{0\}\times \Ss$ such that $(\Y,\pi \nu)\cong (\G/\Cc, \phi \nu )$ as $\G$-spaces (mod $0$), where $\phi:\G/(\{0\}\times \Ss) \to \G/\Cc$ denotes the canonical factor map, confer e.g. \cite[Chapter 4, Section 2, Proposition 2.4 (b)]{margulis}. 
Now, if $\Cc$ is properly larger than $\{0\}\times \Ss$, then 
 there exists $(\rr,\s)\in \Cc$ with $\rr\neq (0,\ldots, 0)$ in $\X$, i.e. there is some $j\in\{0,\ldots,\nl\}$ such that the $j$-th coordinate $\rr_j$ of $\rr$ is non-zero in $\mathbb{Q}_{p_j}$, then since $(\widetilde{\s}\rr, 1)= (0,\widetilde{\s})(\rr,\s)(0,\s^{-1}\widetilde{\s}^{-1})\in \Cc$ for any $\widetilde{\s}\in \Ss$ we obtain that the $j$-th coordinate in $\Cc$ contains all elements of the group generated by $\{\widetilde{\s} \rr_j\, : \, \widetilde{\s}\in \Ss\}$ which is exactly $\rr_j \mathbb{Z}[\frac{1}{p_1},\ldots,\frac{1}{p_{\nl}}]$. Since $C$ is closed and  $\mathbb{Z}[\frac{1}{p_1},\ldots,\frac{1}{p_{\nl}}]$ is dense in $\mathbb{Q}_{p_j}$ we obtain that the $j$-th  coordinate of $C$ has to be whole $\mathbb{Q}_{p_j}$. The same argument goes through for any non-zero coordinate entry of $C$ such that in the end we obtain $Y\cong \G/(\prod_{j\in J^c} \mathbb{Q}_{p_j})\cong \prod_{j\in J} \mathbb{Q}_{p_j}$ as $\G$-spaces for some index set $J\subseteq \{1,\ldots,\nl\}$, such that the measure $\pi\nu$ on $\Y$ is transformed  to $\pr_{J}\nu$ on $\prod_{j\in J} \mathbb{Q}_{p_j}$. 
\end{proof}

Now, in order to relat the boundary theory of $(\Gamma,\tau)$ with those of $(\G,\mu)$ we need to 
 require some relations for the measures in charge. For this end let us consider the following subgroups
 $$\Lam:=\mathbb{Z}\rtimes \{1\} \leq \Gamma \text{ and } \K:= \mathbb{Z}_{p_1}\times \ldots \times \mathbb{Z}_{p_{\nl}} \rtimes \{1\}.$$
We remark that $(\Gamma,\Lambda)$ is a so-called reduced Hecke pair -  i.e. has finite $\Lam$-orbits on $\Gamma/\Lambda$ - and $(\G,\K)$ its Schlichting completion, see e.g. \cite{hecke} for  details and definitions. Important to notice is  that the coset spaces $\Gam/\Lam$ and $\G/\K$ are $\Gamma$-equivariant isomorphic. Such an isomorphism is for instance given by  $\theta: \Gam/\Lam \longrightarrow \G/\K$ with $\gamma\Lambda\mapsto \rho(\gamma)\K$, see \cite[Proposition 3.9]{hecke}. \\
Given a probability measure $\tau$ on $\Gam$ we want to construct a probability measure $\mu_{\tau}$ on $\G$ such that they coincide on the coset spaces $\Gam/\Lam$ and $\G/\K$,  in a sense that \begin{equation}\label{eq: coset eq} \tau(\ga \Lam)=\mu_{\tau}(\theta(\ga \Lam)),\ \forall \ga\Lam\in\Gam/\Lam.\end{equation}
Given $\tau\in \Prob(\Gam)$ we set 
\begin{equation}\label{eq: mu}\mu_{\tau}(f):=
\sum_{\gamma\Lambda\in \Gamma/\Lambda}\tau(\gamma\Lambda)\int_{\K} f(\rho(\gamma)\kk)\, d\m_{\K}(\kk)
\end{equation}
 for $f:\G\to \mathbb{R}$ measurable, 
where $\m_{\K}$ denotes the Haar measure on $\K$ with $\m_{\K}(\K)=1$. 
 The above is well-defined due to left-$\rho(\Lambda)$-invariance of $\m_{\K}$.
 Remark that equation~\ref{eq: coset eq} is indeed fulfilled in this construction.

Measures which are $\Lam$-invariant on the coset spaces $\Gamma/\Lambda$ provide a theory of relating $(\G,\mu_{\tau})$-stationary spaces to  $(\Gam,\tau)$-stationary ones. As in our perivious work \cite{BHO1} we may give  these measures a name. 

\begin{definition}
We call   a probability measure $\tau$ on $\Gam$ \textit{$\Lam$-absorbing} if 
$$\tau(\la \ga \Lam)=\tau(\ga \Lam), \ \forall \la\in \Lam
$$ for all $\ga\in \Gam$.
\end{definition}

Using these measures in  Brofferio's result in Theorem~\ref{thm: broff} we obtain now a homogeneous setting.
Let us fix a Haar measure $\m_{\mathbb{Q}_{p_i}}$ on $\mathbb{Q}_{p_i}$ and let $\mu_{\tau}$  denote the probability measure on $\G$ in equation \ref{eq: mu} corresponding to $\tau$.

\begin{corollary}\label{cor: poi G}
Let $\tau$ be a  finitely supported, generating probability measure  on $\Gamma$ which is $\Lam$-absorbing and has negative drift. Then 
 $$ \poi(\G,\mu_{\tau})=\poi(\Gamma,\tau)=\Big(\prod_{i=1}^{\nl}\mathbb{Q}_{p_i},\nu\Big)$$ where $\nu$ is  the unique $\tau$-stationary and unique $\mu_{\tau}$-stationary probability measure on  $\prod_{i=1}^{\nl}\mathbb{Q}_{p_i}$.  Moreover, $\nu\ll\bigotimes_{i=1}^{\nl} \m_{\mathbb{Q}_{p_i}}$. 
\end{corollary}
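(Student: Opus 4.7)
The plan is to combine Brofferio's Theorem~\ref{thm: broff}, applied directly to $(\Gamma,\tau)$, with the Hecke-pair transfer machinery developed in our previous work \cite{BHO1} in order to push the Poisson boundary description from $\Gamma$ up to $\G$. First, since $\tau$ is finitely supported, generating, and has negative drift, Theorem~\ref{thm: broff} applies and yields directly $\poi(\Gamma,\tau)=(\prod_{i=1}^{\nl}\mathbb{Q}_{p_i},\nu)$ with $\nu$ the unique $\tau$-stationary probability measure on $\X:=\prod_{i=1}^{\nl}\mathbb{Q}_{p_i}$. This handles the middle equality and the uniqueness of $\nu$ as a $\tau$-stationary measure.

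Next I would verify that $\mu_\tau$ is admissible on $\G$. Since $\Ss$ is discrete, $\K$ is an open compact subgroup of $\G$, so the Haar-averaging over $\K$ in the formula~\eqref{eq: mu} for $\mu_\tau$ makes $\mu_\tau$ absolutely continuous with respect to $m_\G$, hence spread-out; generation follows from the fact that $\supp(\tau)$ generates $\Gamma$ as a semi-group and $\rho(\Gamma)$ is dense in $\G$. Now the heart of the argument: because $\tau$ is $\Lambda$-absorbing, the Hecke-pair transfer developed in \cite{BHO1} for the pair $(\Gamma,\Lambda)$ with Schlichting completion $(\G,\K)$ provides a correspondence between $\tau$-stationary $\Gamma$-spaces and $\mu_\tau$-stationary $\G$-spaces that sends (Poisson) boundaries to (Poisson) boundaries. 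Applied to $(\X,\nu)$, this shows simultaneously that $\nu$ is $\mu_\tau$-stationary and that $(\X,\nu)=\poi(\G,\mu_\tau)$, giving the left-hand equality. Uniqueness of the $\mu_\tau$-stationary probability measure on $\X$ is then immediate from this correspondence, since any such measure would yield a $\tau$-stationary measure on $\X$ and hence must equal $\nu$ by Brofferio's theorem.

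Finally, for the claim $\nu\ll \bigotimes_{i=1}^{\nl} m_{\mathbb{Q}_{p_i}}$, I would exploit the identity $\nu=\mu_\tau * \nu$: since $\mu_\tau\ll m_\G$ and $\X=\G/\hat{\Ss}$ is a $\G$-homogeneous space whose $\G$-quasi-invariant measure class agrees with the product Haar class $\bigotimes_{i=1}^{\nl} m_{\mathbb{Q}_{p_i}}$, the convolution $\mu_\tau * \nu$ belongs to that class. The main obstacle I anticipate is the careful invocation of the transfer result from \cite{BHO1} in this setting; the previous work was formulated for countable direct sums, whereas here $\G$ is a locally compact totally disconnected completion of a non-direct-sum group, so one must check that the $\Lambda$-absorbing hypothesis still suffices to propagate the boundary correspondence in both directions. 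Once that is in hand, the remaining pieces are straightforward consequences of Brofferio's theorem and of the spread-out property of $\mu_\tau$.
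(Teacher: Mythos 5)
Your proposal is correct and follows essentially the same route as the paper: Brofferio's theorem for $\poi(\Gamma,\tau)$, the Hecke-pair transfer from \cite{BHO1} (there stated as Theorem 1.6, properties (P2) and (P3)) to identify $\mu_\tau$-stationary measures with $\tau$-stationary ones and hence force uniqueness, and the uniqueness of the $\G$-quasi-invariant measure class on the homogeneous space $\X\cong\G/\hat{\Ss}$ for the absolute continuity. The only cosmetic differences are that the paper obtains existence of the $\mu_\tau$-stationary measure by rerunning Brofferio's construction rather than via the transfer correspondence, and deduces $\nu\ll\bigotimes_i \m_{\mathbb{Q}_{p_i}}$ from quasi-invariance of $\nu$ rather than from the convolution identity $\nu=\mu_\tau*\nu$; your worry about \cite{BHO1} being restricted to direct sums is unfounded, since its Hecke-pair machinery applies to the present completion.
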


\begin{proof}
Let $\nu$ be the unique $\tau$-stationary probability measure given by Theorem~\ref{thm: broff}.
It is not hard to show that every $\mu_{\tau}$-stationary probaility measure is $\tau$-stationary (see \cite[Theorem 1.6 (P3)]{BHO1}). 
Thus, there can be at most one $\mu_{\tau}$-stationary probability measure on $\mathbb{Q}_{p_1}\times \ldots \times \mathbb{Q}_{p_{\nl}}$ and it has to coincide with $\nu$.
The existence of a $\mu_{\tau}$-stationary probability measures can be shown along the same lines as Brofferio's construction in \cite{broff fin} for $\tau$-stationary probability measures. 
Note that $\mu_{\tau}$ is generating for $\G$ since $\tau$ is generating $\Gamma$ (\cite[Theorem 1.6 (P2)]{BHO1}).
In particular, $\nu$ is $\G$-quasi-invariant. Since on homogeneous spaces there is a unique $\G$-quasi-invariant measure class (see e.g.  \cite[Theorem 5.19]{quantum}) we obtain that $\nu \ll \bigotimes_{i=1}^{\nl} \m_{\mathbb{Q}_{p_i}}$.
\end{proof}

In \cite[Corollary 4.15]{BHO1}, it is shown that every $\Gamma$-factor of a $\G$-ergodic space is ($\Gamma$-equivariant) isomorphic to a $\G$-factor. Thus, from  Lemma~\ref{lem: homo factor} and Corollary~\ref{cor: poi G} we can conclude
\begin{corollary}\label{cor: factor}
Let $\tau$ be a $\Lam$-absorbing, finitely supported, generating probability measure on $\Gamma$  with negative drift. Then all $\tau$-boundaries of $\Gamma$ are of the form
$$\Big(\prod_{j\in \J}\mathbb{Q}_{p_j},  \eta_{\J}\Big)$$ for some index set $\J\subseteq \{1,\ldots,\nl\}$ 
where $\Gamma$ acts on $\prod_{j\in \J}\mathbb{Q}_{p_j}$ via the embedding in $\G$ and $\eta_{J} \ll \bigotimes_{j\in\J} \m_{\mathbb{Q}_{p_j}}$.
\end{corollary}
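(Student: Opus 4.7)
The plan is to chain together three earlier ingredients: (1) every $\tau$-boundary is a $\Gamma$-factor of the Poisson boundary $\poi(\Gamma,\tau)$; (2) by Corollary~\ref{cor: poi G} this Poisson boundary coincides with $\poi(\G,\mu_\tau) = (\X,\nu)$, and in particular is $\G$-ergodic; (3) the result \cite[Corollary 4.15]{BHO1} cited in the text then upgrades any $\Gamma$-factor of $(\X,\nu)$ to a $\G$-factor of $(\X,\nu)$; and (4) Lemma~\ref{lem: homo factor} classifies all such $\G$-factors as the coordinate projections $(\prod_{j\in\J}\mathbb{Q}_{p_j}, \pi_\J\nu)$.

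Concretely, I would start with an arbitrary $\tau$-boundary $(\Y,\eta)$. By definition it is a $\Gamma$-factor of $\poi(\Gamma,\tau)$, which by Corollary~\ref{cor: poi G} equals $(\X,\nu)$ and is also $\poi(\G,\mu_\tau)$. Since $(\X,\nu)$ is a $\mu_\tau$-boundary, the $\G$-action on it is ergodic (uniqueness of the $\mu_\tau$-stationary measure given by Corollary~\ref{cor: poi G} forces this), so \cite[Corollary 4.15]{BHO1} applies and yields a $\G$-equivariant isomorphism (mod null sets) between $(\Y,\eta)$ and a $\G$-factor of $(\X,\nu)$. Lemma~\ref{lem: homo factor} identifies the latter as $(\prod_{j\in\J}\mathbb{Q}_{p_j}, \pi_\J\nu)$ for some $\J \subseteq \{1,\ldots,\nl\}$, with $\Gamma$ acting through its embedding $\rho:\Gamma\hookrightarrow \G$, so setting $\eta_\J := \pi_\J\nu$ gives the desired form.

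It remains to check the absolute continuity $\eta_\J \ll \bigotimes_{j\in\J}\m_{\mathbb{Q}_{p_j}}$. By Corollary~\ref{cor: poi G} we already know $\nu \ll \bigotimes_{i=1}^{\nl}\m_{\mathbb{Q}_{p_i}}$, say with density $f$. For any Borel set $E \subseteq \prod_{j\in\J}\mathbb{Q}_{p_j}$ with $\bigotimes_{j\in\J}\m_{\mathbb{Q}_{p_j}}(E) = 0$, Fubini's theorem (applied to the $\sigma$-finite product measure $\bigotimes_{i=1}^{\nl}\m_{\mathbb{Q}_{p_i}}$) gives
$$
\eta_\J(E) = \nu(\pi_\J^{-1}E) = \int_E\Big(\int_{\prod_{i\notin\J}\mathbb{Q}_{p_i}} f\, d\!\!\bigotimes_{i\notin\J}\m_{\mathbb{Q}_{p_i}}\Big)\, d\!\!\bigotimes_{j\in\J}\m_{\mathbb{Q}_{p_j}} = 0,
$$
which is exactly the required absolute continuity (and also exhibits an explicit density).

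Since the statement is essentially the concatenation of Lemma~\ref{lem: homo factor}, Corollary~\ref{cor: poi G}, and \cite[Corollary 4.15]{BHO1}, there is no real obstacle; the only subtle point is making sure the $\G$-ergodicity hypothesis of \cite[Corollary 4.15]{BHO1} is satisfied, which follows from the uniqueness clause of Corollary~\ref{cor: poi G}. Everything else, including the Fubini step for absolute continuity, is routine.
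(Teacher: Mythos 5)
Your argument is correct and follows exactly the paper's own route: it is the concatenation of Corollary~\ref{cor: poi G}, the factor-upgrading result \cite[Corollary 4.15]{BHO1}, and Lemma~\ref{lem: homo factor}. The paper leaves the ergodicity check and the Fubini step for $\pi_{\J}\nu\ll\bigotimes_{j\in\J}\m_{\mathbb{Q}_{p_j}}$ implicit, whereas you spell them out, which is a harmless (indeed helpful) addition.
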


The above Corollary~\ref{cor: poi G} together with the entropy observation in Proposition~\ref{prop: entropy formula} will yield  to a proof of  Theorem~\ref{thm: main}, provided we construct $\tau$ such that it is finitely supported, $\Lambda$-absorbing and has negative drift.  
Let us do this construction for more general semi-directs products in the next subsection.

\subsection{A construction of absorbing measures to design entropy}
Here $\R$ shall be a 
countable
ring with $1$ and $\Ss$ a multiplicative subgroup of $\R$.
Let us consider a 
Hecke pair $(\Gam, \Lam)$ of the form \begin{center} $\Gam=\R\rtimes \Ss$ and $\Lam=\U\rtimes \{e\}$ \end{center} where $\U$ is an additive subgroup of $\R$.
Note that $(\Gam,\Lam)$ is a Hecke pair iff $[\U \, : \, \s\U\cap \U]<\infty $ for every $\s\in \Ss$.

We will construct a finitely supported, generating and $\Lam$-absorbing probability measure $\tau$ on $\Gam$ that is $\Lam$-invariant and of the following form 
\begin{equation}\label{eq: split}
\tau(\rr, \s)=\kappa_{\s}(\rr)\iota(\s)
\end{equation}
for probability measures $\kappa_{\s}$ on $\R$ and $\iota$ on $\Ss$.
In order to obtain that  $\tau$ is  $\Lam$-absorbing, i.e.   
$$\tau((\uu,e)(\rr,\s)\Lam)=\tau((\rr,\s)\Lam), \ \forall \uu\in\U$$
we need
\begin{equation}\label{eq: absorb}
 \kappa_{\s}(\uu+\rr+\s\U)= \kappa_{\s}(\rr+\s\U), \ \forall \uu\in\U
\end{equation}
for every   $\rr\in \R$ and $\s\in \Ss$ (with $\iota(\s)\neq 0$). 
To this end let $\V_{\s}\subseteq \R$ be a set of representatives for $\U/(\s\U\cap \U)$ with $0\in \V_{\s}$, where the cosets are taken w.r.t. addition. By the Hecke assumption $\V_s$ is finite for every $\s\in\Ss$. Similar as in  \cite[Remark 1.15]{BHO1}, we may thus construct $\Lam$-absorbing probability measures in the following way.
\begin{lemma}\label{lem: constr absorb}
Let $\kappa$ be a 
 probability measure  on $\R$. Then 
$$\kappa_{\s}(\ttt):=\frac{1}{\vert \V_{\s}  \vert } \sum_{\vv_{\s}\in \V_{\s}}\kappa (\ttt+\vv_{\s}), \ \ttt\in\R
$$  is a probability measure on $\R$  fulfilling  equation \ref{eq: absorb} for every $\s\in \Ss$.
In particular, if $\Gamma$ is finitely generated, 
 then  
 there exists a finitely supported, generating, $\Lambda$-absorbing probability measure $\tau$ on $\Gamma$ of the form \ref{eq: split}. 
\end{lemma}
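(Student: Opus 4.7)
The plan is to treat the two assertions separately. The first requires unpacking the construction and performing a coset bookkeeping argument; the second is then an assembly of pieces using finite generation of $\Gamma$.

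For the first claim, that $\kappa_\s$ is a probability measure, I would swap summation order to get
\[
\sum_{\ttt \in \R}\kappa_\s(\ttt) \;=\; \frac{1}{|\V_\s|}\sum_{\vv \in \V_\s}\sum_{\ttt \in \R} \kappa(\ttt+\vv) \;=\; 1.
\]
The heart of the argument is equation~\ref{eq: absorb}. Unfolding the definition,
\[
\kappa_\s(\rr + \s\U) \;=\; \frac{1}{|\V_\s|}\sum_{\vv \in \V_\s}\kappa(\rr + \s\U + \vv).
\]
I would then verify that the translates $\rr + \s\U + \vv$ indexed by $\vv \in \V_\s$ are pairwise disjoint: if two of them met, the corresponding representatives would agree modulo $\s\U \cap \U$, and by the choice of $\V_\s$ this forces $\vv = \vv'$. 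Hence the sum collapses to $\frac{1}{|\V_\s|}\kappa(\rr + \V_\s + \s\U)$. Next I would check the coset identity $\V_\s + \s\U = \U + \s\U$: the inclusion $\subseteq$ is clear since $\V_\s \subseteq \U$, while $\supseteq$ follows from $\U = \V_\s + (\s\U \cap \U) \subseteq \V_\s + \s\U$. The set $\U + \s\U$ is invariant under translation by any $\uu \in \U$, and therefore
\[
\kappa_\s(\uu + \rr + \s\U) \;=\; \tfrac{1}{|\V_\s|}\kappa(\rr + \U + \s\U) \;=\; \kappa_\s(\rr + \s\U),
\]
which is equation~\ref{eq: absorb}.

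For the second assertion, assume $\Gamma$ is finitely generated and fix a finite symmetric generating set $\{(\rr_i, \s_i)\}_{i=1}^n$. I would pick a finitely supported probability measure $\kappa$ on $\R$ whose support contains $\{0, \rr_1, \ldots, \rr_n\}$ and a finitely supported probability measure $\iota$ on $\Ss$ whose support contains $\{e, \s_1, \ldots, \s_n\}$, and set $\tau(\rr, \s) := \kappa_\s(\rr)\iota(\s)$. Finite support of $\tau$ is immediate: $\iota$ has finite support, and for each $\s$ in that support $\V_\s$ is finite by the Hecke assumption, so $\supp(\kappa_\s) \subseteq \supp(\kappa) - \V_\s$ is finite as well. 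The $\Lam$-absorbing property of $\tau$ reduces to equation~\ref{eq: absorb} by the product form~\ref{eq: split}. For generation, I would exploit that $0 \in \V_\s$, which forces $\supp(\kappa) \subseteq \supp(\kappa_\s)$ for every $\s$; hence $\supp(\tau) \supseteq \supp(\kappa) \times \supp(\iota)$, which contains the symmetric generating set $\{(\rr_i, \s_i)\}_{i=1}^n$.

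I expect the main technical obstacle to be the coset bookkeeping in Step~1, namely simultaneously establishing disjointness of the translates $\rr + \s\U + \vv$ and the identity $\V_\s + \s\U = \U + \s\U$. Once these are in hand, the rest consists of routine unpacking of definitions.
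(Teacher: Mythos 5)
Your proof is correct and follows essentially the same route as the paper's: both reduce equation~\ref{eq: absorb} to the observation that $\bigsqcup_{\vv\in\V_{\s}}(\vv+\s\U)$ is a disjoint union (by the choice of representatives) equal to the $\U$-translation-invariant set $\U+\s\U$. The only cosmetic difference is that the paper exhibits this set as $\bigsqcup_{\ww\in\W_{\s}}(\ww+\U)$ for representatives $\W_{\s}$ of $\s\U/(\s\U\cap\U)$ while you verify $\V_{\s}+\s\U=\U+\s\U$ directly, and your treatment of the second assertion (finite support and generation via $0\in\V_{\s}$) matches the paper's.
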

\begin{proof}
We need to show that $\kappa_{\s}$ is $\U$-invariant on every coset $\rr+\s\U$, hence that
$$
 \sum_{\vv_{\s}\in \V_{\s} } \kappa(\uu_0+\rr+\s\U+\vv_{\s})= 
 \sum_{\vv_{\s}\in \V_{\s} }\kappa(\rr+\s\U+\vv_{\s}) \text{ for every } \uu_0\in\U.
$$
Let  $\W_{\s}$ be a set of representatives of $\s\U/(\s\U\cap \U)$.
In particular,
$$\s\U=\bigsqcup_{\ww_{\s}\in \W_{\s} }(\ww_{\s}+ \s\U \cap \U )\ \text{ and } \ \U=\bigsqcup_{\vv_{\s}\in \V_{\s} }(\vv_{\s}+ \s\U \cap \U),$$
hence  $$\bigsqcup_{\vv_{\s}\in \V_{\s} } (\s\U+\vv_{\s})= 
\bigsqcup_{\vv_{\s}\in \V_{\s} } \bigsqcup_{\ww_{\s}\in \W_{\s} }(\ww_{\s}+ \s\U \cap \U +\vv_{\s})=
\bigsqcup_{\ww_{\s}\in \W_{\s} } (\ww_{\s}+\U).$$ This implies
 $\sum_{\vv_{\s}\in \V_{\s} } \kappa(\uu_0+\rr+\s\U+\vv_{\s})=
  \sum_{\ww_{\s}\in \W_{\s} }\kappa(\rr+\U+\ww_{\s}) =
 \sum_{\vv_{\s}\in \V_{\s} }\kappa(\rr+\s\U+\vv_{\s}),$ which ends the proof of the first statement.
 
It is left to show that we can choose $\iota$ and $\kappa$ such that $\tau$ given by equation~\ref{eq: split} is finitely supported and generating, whenever $\Gamma$ is finitely generated. 
Let $\E$ be a finite set of generators of $\Gamma$. We shall find finitely supported probability measures $\iota$ on $\Ss$ and $\kappa$  on $\R$
such that \begin{equation}\label{eq: supp tau}\supp(\tau) =\{(\rr,\s)\, : \, \rr\in \supp(\kappa_{\s}), \, \s\in \supp(\iota)\}  \supseteq \E.
\end{equation}
First, we choose a finitely supported probability measure $\iota$ on $\Ss$ such that \begin{equation}\label{eq: supp}
\supp(\iota)\supseteq \pr_{\Ss}(\E).\end{equation}
 Such a measure always exists,  for instance  the uniform measure on $ \pr_{\Ss}(\E)$. 
  Further, for every $\s\in \Ss$ we see that  $ \supp(\kappa_{\s})\supseteq \supp(\kappa)$ because $0\in V_{\s}$ by assumption. 
 Thus, to obtain \ref{eq: supp tau} it suffices to take $\kappa$ to be a finitely supported probability measure with support containing $\pr_{\R}(\E)$, for example the uniform probability measure on $\pr_{\R}(\E)$.

\end{proof}

Now let us go back to the case $$\R=\mathbb{Z}[\frac{1}{p_1},\ldots,\frac{1}{p_{\nl}}]\text{  and } \Ss=\{p_1^{\n_1}\cdots p_{\nl}^{\n_{\nl}} \, : \, \n_i\in\mathbb{Z}\}.$$ 

\begin{lemma}\label{lem: beta}
Given $\beta_1,\ldots,\beta_{\nl}>0$ we can find a finitely supported, generating, $\Lam$-absorbing probability measure $\tau$ on $\Gamma$
 such that $$\phi_{p_i}(\tau)=-\beta_i$$ for any $i=1,\ldots,\nl$.
\end{lemma}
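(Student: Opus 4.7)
The plan is to apply Lemma~\ref{lem: constr absorb}, which already produces a finitely supported, generating, $\Lambda$-absorbing measure of the form $\tau(r,s)=\kappa_s(r)\iota(s)$, and then to exhibit concrete choices of $\kappa$ and $\iota$ that impose the desired drifts. The key observation is that $\phi_{p_i}(\tau)$ depends only on $\iota=\pr_{\Ss}\tau$: writing $s=p_1^{n_1(s)}\cdots p_l^{n_l(s)}$, one has $\log\Vert s\Vert_{p_i}=-n_i(s)\log p_i$, so
$$
\phi_{p_i}(\tau)=\sum_{s\in\Ss}\iota(s)\log\Vert s\Vert_{p_i} = -\log(p_i)\sum_{s\in\Ss}\iota(s)\,n_i(s).
$$
Hence the condition $\phi_{p_i}(\tau)=-\beta_i$ is equivalent to $\mathbb{E}_\iota[n_i]=\delta_i$, where $\delta_i:=\beta_i/\log(p_i)>0$.

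Next I would fix a finite generating set of $\Gamma$, for instance $\E=\{(\pm 1,1)\}\cup\{(0,p_i^{\pm 1}):i=1,\ldots,\nl\}$, whose projections are $\pr_{\R}(\E)=\{-1,0,1\}$ and $\pr_{\Ss}(\E)=\{1,p_1^{\pm1},\ldots,p_l^{\pm1}\}$. For $\kappa$ I would take the uniform measure on $\{-1,0,1\}$. For $\iota$ I would look for a finitely supported probability measure on $\Ss$ supported in $\{1\}\cup\{p_i^{\pm 1}:i\}\cup\{p_i^{N_i}:i\}$ for suitable positive integers $N_i$, of the form
$$
\iota(p_i^{\pm 1})=\eps,\qquad \iota(p_i^{N_i})=\alpha_i,\qquad \iota(1)=1-2\nl\eps-\sum_{i=1}^{\nl}\alpha_i,
$$
for small $\eps>0$ and positive $\alpha_i$. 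Since $n_i$ vanishes on $1$, on $p_j^{\pm 1}$ for $j\neq i$, and on $p_j^{N_j}$ for $j\neq i$, the symmetric pair $(p_i,p_i^{-1})$ contributes $0$ to $\mathbb{E}_\iota[n_i]$, and one finds $\mathbb{E}_\iota[n_i]=N_i\alpha_i$.

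Setting $\alpha_i:=\delta_i/N_i$ then enforces $\mathbb{E}_\iota[n_i]=\delta_i$ as desired, and the only remaining constraint is that the total mass be $1$, equivalently $2\nl\eps+\sum_i \delta_i/N_i<1$, which is satisfied for any $\eps>0$ small enough and all $N_i$ sufficiently large. By construction $\supp(\iota)\supseteq\pr_{\Ss}(\E)$ and $\supp(\kappa)\supseteq \pr_{\R}(\E)$, so Lemma~\ref{lem: constr absorb} yields a finitely supported, generating, $\Lambda$-absorbing probability measure $\tau$ of the form \eqref{eq: split} with these marginals. Inserting $\iota$ into the displayed formula for $\phi_{p_i}(\tau)$ gives $\phi_{p_i}(\tau)=-\log(p_i)\cdot\delta_i=-\beta_i$, completing the construction. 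The only nontrivial point is the simultaneous realization of all $\nl$ linear constraints $\mathbb{E}_\iota[n_i]=\delta_i$ together with the normalization of $\iota$ and the support condition forcing generation, and the auxiliary weights $p_i^{N_i}$ with $N_i$ large provide exactly the slack needed to reconcile them.
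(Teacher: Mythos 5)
Your proposal is correct. It follows the same overall strategy as the paper: observe that $\phi_{p_i}(\tau)$ depends only on the $\Ss$-marginal $\iota$, reduce the drift conditions to the linear constraints $\mathbb{E}_{\iota}[n_i]=\beta_i/\log(p_i)$, and then invoke Lemma~\ref{lem: constr absorb} to turn any finitely supported $\iota$ and $\kappa$ whose supports cover $\pr_{\Ss}(\E)$ and $\pr_{\R}(\E)$ into a finitely supported, generating, $\Lambda$-absorbing $\tau$ of the form \eqref{eq: split}. The only genuine difference is the explicit realization of $\iota$: the paper identifies $\Ss\cong\mathbb{Z}^{\nl}$ and takes a product measure $\bigotimes_i\iota_i$ with each $\iota_i$ a convex combination of a convolution power $\sigma^{*N_i}$ of a fixed nonzero-mean measure on $\mathbb{Z}$ and $\delta_0$, diluted so the mean comes out right and $N_i$ chosen large enough to cover the generating set; you instead place symmetric atoms at $p_i^{\pm 1}$ (which guarantee generation while contributing nothing to the drift) plus one small atom at a high power $p_i^{N_i}$ carrying the entire drift. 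Your $\iota$ is not a product measure, but neither the lemma nor the remainder of the argument requires that, and your derivation of $\phi_{p_i}(\tau)=-\log(p_i)\,\mathbb{E}_{\iota}[n_i]$ is in fact cleaner than the paper's intermediate displays, which contain sign and normalization slips (the intended identity is exactly the one you state). Both constructions use the same "large $N_i$ provides slack" mechanism to reconcile the drift constraints with normalization and with the support condition forcing generation.
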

\begin{proof}
We will construct $\tau$ as in equation \ref{eq: split}.
Recall that $(\Ss,\cdot)\cong (\mathbb{Z}^{\nl},+)$.
We  may thus set $\iota$ to be a product measure
$$\iota=\bigotimes_{i=1}^{\nl} \iota_i
$$
 with $\iota_i$ being probability measures on $\mathbb{Z}$. With $\tau$ as in equation \ref{eq: split} we thus obtain 
 $$\phi_{p_i}(\tau)=\sum_{n\in\mathbb{N}} \iota_{i}(n) n\log(p_i).
 $$ 
 Hence we shall find finitely supported, generating probability measures
  $\iota_i$ such that \begin{equation}\label{eq: alpha} \sum_{n\in\mathbb{Z}}  \iota_{i}(n)  n \log(p_i)=\frac{\beta_i}{\log(p_i)}=: \alpha. \end{equation}
 Let $\sigma$ be a finitely supported, generating probability measure on $\mathbb{Z}$ with non-zero mean $\mathbb{E}[\sigma]=\sum_{n\in\mathbb{Z}}\sigma(n) n$, for example $\sigma=\frac{3}{4}\delta_{1}+\frac{1}{4}\delta_{-1}$. 
 For every $i$ choose $N_i$ such that $\frac{\beta_i}{\mathbb{E}[\sigma]}<N_i$. By setting
  $$\iota_i:=\frac{\beta_{i}}{\mathbb{E}[\sigma] N_i} \sigma^{*N_i}+
  \Big(1-\frac{\beta_{i}}{\mathbb{E}[\sigma] N_i}\Big) \delta_0
 $$ 
  we obtain an instance of a generating, finitely supported probability measure fulfilling equation \ref{eq: alpha} since $\mathbb{E}[ \sigma^{*N_i}]=N_i  \mathbb{E}[ \sigma]$.\\
   Recall that in order to obtain  $\tau(\rr,\s):=\kappa_{\s}(\rr)\iota(\s)$  to be $\Lambda$-absorbing, finitely supported and generating, we have seen in the proof of Lemma~\ref{lem: constr absorb}, that the only constraint on $\iota$ is that  $\supp(\iota)\subseteq \pr_{\Ss}(\E)$ (equation \ref{eq: supp}) for a  finite generating set $\E$ of $\Gamma$. To guarantee this we can just choose the above numbers $N_i$ so large that $\prod_{i=1}^{\nl}\supp(\sigma^{*N_i})\supseteq \pr_{\Ss}(\E)$ which gives $\supp(\iota) \supseteq \pr_{\Ss}(\E)$ since $ \supp(\iota_i) =\supp(\sigma^{*N_i})\cup\{0\}$.
 \end{proof}

\subsection{Proof of Theorem~\ref{thm: main}} 

Let $\Gamma$, $\G$, $\Lambda$ as in Theorem~\ref{thm: main} and set  $\X:=\mathbb{Q}_{p_1}\times \ldots \times \mathbb{Q}_{p_{\nl}}$.

\begin{proof}[Proof of Theorem~\ref{thm: main}]

We start with the ansatz $$
\tau(\rr,\s):=\kappa_{\s}(\rr)\bigotimes_{i=1}^{\nl}\iota_i(\s)
$$
where we identify $(\Ss,\cdot)\cong (\mathbb{Z}^{\nl},+)$ to let $\iota=\bigotimes_{i=1}^{\nl}\iota_i$ be a product measure on $\Ss$ with probability measures $\iota_i$ on $\mathbb{Z}$. By Lemma~\ref{lem: constr absorb} we can choose probability measures $\iota_i$ on $\mathbb{Z}$ and $\kappa_s$  on $\mathbb{Z}[\frac{1}{p_1},\ldots,\frac{1}{p_{\nl}}]$, such that $\tau$ becomes a $\Lambda$-absorbing, finitely supported and  generating probability measure with the property that \begin{equation}\label{eq: drift beta} \phi_{p_i}(\tau)=-\beta_i, \ \forall i. 
\end{equation} In particular, the drift is negative and thus by Corollary~\ref{cor: factor} we know that every $\tau$-boundary is of the form 
$$(\Y_{\J}, \eta_{\J}):=(\prod_{j\in \J}\mathbb{Q}_{p_j},\pi_{\J}\nu)$$ for some index set $\J\subseteq \{1,\ldots,\nl\}$ for $\nu$ given by Theorem~\ref{thm: broff} with  $\pi_{\J}\nu\ll  \bigotimes_{j\in J} \m_{\mathbb{Q}_{p_j}}$.
Thus, by  Proposition~\ref{prop:  entropy formula} we obtain 
$$h_{\tau}(\Y_{\J}, \eta_{\J})=\sum_{j\in \J} \int_{ \Gam} -\log(\vert \s \vert_{p_j}) \, d \tau(\rr,\s)=
\sum_{j\in \J}  -\phi_{p_j}
$$
since  $$ \frac{d(\rr,\s)^{-1} \m_{\mathbb{Q}_{p_j}}}{d\m_{\mathbb{Q}_{p_j}}} =
\vert \s \vert_{p_j}. $$ 
Thus, by equation \ref{eq: drift beta} we obtain  $h_{\tau}(\Y_{\J}, \eta_{\J})=\sum_{j\in \J}  \beta_j$.
\end{proof}

\end{document}